\DeclareFontFamily{U}{rsfs}{\skewchar\font127 }
\DeclareFontShape{U}{rsfs}{m}{n}{%
   <-6> rsfs5
   <6-8> rsfs7
   <8-> rsfs10
}{}
\newcommand*{\be}[1]{\begin{equation}\label{#1}}
\newcommand*{\ee}{\end{equation}}
\DeclareMathOperator{\card}{Card}
\DeclareMathOperator{\Span}{span}
\definecolor{pink}{RGB}{255, 179, 195}
\DeclareMathOperator{\grad}{grad}
\DeclareMathOperator{\curl}{curl}
\DeclareMathOperator{\diverenge}{div}
\DeclareMathOperator{\im}{im}
\newcommand{\Pcal}{\mathcal{P}}
\renewcommand{\div}{\diverenge}
\renewcommand{\emptyset}{\varnothing}
\newcommand*{\lrangle}[1]{\left\langle#1\right\rangle}
\newtheorem{theorem}{Theorem}[section]
\newtheorem{lemma}[theorem]{Lemma}
\theoremstyle{definition}
\newtheorem{definition}[theorem]{Definition}
\newtheorem{example}[theorem]{Example}
\newtheorem{proposition}[theorem]{Proposition}
\newtheorem{assumption}{Assumption}
\theoremstyle{remark}
\newtheorem{remark}[theorem]{Remark}
\numberwithin{equation}{section}
\begin{document}

\title{A Construction of $C^r$ conforming finite element spaces in any dimension}

\author{Jun Hu}
\address{LMAM and School of Mathematical Sciences, Peking University, Beijing 100871,
P. R. China.}

 \email{hujun@math.pku.edu.cn}

\author{Ting Lin}
\address{School of Mathematical Sciences, Peking University, Beijing 100871,
P. R. China.}
\email{lintingsms@pku.edu.cn}

\author{Qingyu Wu}
\address{School of Mathematical Sciences, Peking University, Beijing 100871,
P. R. China.}
\email{wu\_qingyu@pku.edu.cn}
\subjclass[2010]{65N30}


\keywords{High Order Problem, Any Dimension, Conforming Finite Element, Intrinsic Decomposition, Index Order}

\begin{abstract}
    This paper proposes a construction of $C^r$ conforming finite element spaces with arbitrary $r$ in any dimension. It is shown that if $k \ge 2^{d}r+1$ the space $\mathcal P_k$ of polynomials of degree $\le k$ can be taken as the shape function space of $C^r$ finite element spaces in $d$ dimensions. This is the first work on constructing such $C^r$ conforming finite elements in any dimension in a unified way. 
\end{abstract}

\maketitle

\section{Introduction}
\label{sec:intro}
This paper is to provide an $H^{r+1}$ conforming finite element method of $2(r+1)$-th order elliptic problems on simplicial triangulations in $\mathbb{R}^d$. The \textit{conforming finite element method} is to seek for piecewise polynomial function spaces with global $C^{r}$ continuity. The commonly used $H^1$ conforming element is the celebrated $C^0$ Lagrange element on simplicial triangulations in $d$ dimensions. While the Hermite element is still $C^0$ conforming but cannot admit higher global continuity when $d>1$. The case $d = 1$ is an exception in the sense that the one-dimensional Hermite element is $H^2$ conforming.
The construction of $C^r$ conforming finite elements on simplicial triangulations in $d$ dimensions is a long-standing open problem \cite{xu2020finite}. One main difficulty is the choice of the shape function space. It is commonly conjectured that the shape function space can be chosen as the space $\mathcal{P}_k$ of polynomials of degree not greater than $k$ with $k \ge 2^dr+1$. However, no successful construction can be found in the literature. Another main difficulty is the design of degrees of freedom. Indeed, the traditional bubble function technique does not work anymore in general.

Many efforts have been made for this problem and partial results can be found in Bramble and Zl\'{a}mal \cite{bramble1970triangular}, where two dimensional $C^r$ elements on triangular grids were constructed for any $r \ge 0$ with $k = 4r+1$. For the case $r = 1$, it recovers the $\mathcal{P}_5-C^1$ Argyris element \cite{argyris1968tuba}. For the three-dimensional case, the first $\mathcal{P}_9-C^1$ element on tetrahedral grids was constructed by \v Zen\' i\v sek in \cite{vzenivsek1970interpolation}. Later, a family of $\mathcal{P}_{8r + 1}-C^r$ elements on tetrahedral grids was constructed for any $r \ge 0$ in Lai and Schumaker \cite{lai2007spline}, which recovers the $\mathcal{P}_9-C^1$ \v Zen\' i\v sek element for the case $r = 1$. Zhang \cite{zhang2009family} also extended the Zen\' i\v sek element into a $\mathcal P_{k}-C^1$ element family for $k\ge 9$. A family of $\mathcal{P}_k -C^2$ elements on tetrahedral grids and a family of $\mathcal{P}_k-C^1$ elements on 4-simplices with $k\ge 17$ were proposed in Zhang \cite{zhang2016family}, where the bubble function spaces were tailored.

On the contrary, the construction of $C^r$ conforming finite elements on $d$-cube grids is much easier. In fact, a family of $C^r$ conforming finite elements was designed in Hu and Zhang \cite{hu2015minimal} on macro-$d$-cube grids by using the space $\mathcal Q_{r+1}$ consisting of polynomials of degree $\le r+1$ in each variable, see also Hu, Huang and Zhang \cite{hu2011lowest} for $C^1$ conforming finite elements on macro-$d$-cube grids.

Given the difficulty of constructing $C^r$ conforming elements in $d$ dimensions, an alternative way is to weaken the continuity and to construct $H^{r+1}$ \textit{nonconforming elements}. In this direction, the first and very elegant construction in any dimension is from Wang and Xu \cite{wang2013minimal}, where nonconforming finite elements on simplicial triangulations were proposed and analyzed for $d \ge r+1$ by using the space $\mathcal{P}_{r+1}$ as the shape function space. For the case $r = 1$, it recovers  a very famous nonconforming element, namely, the Morley element, of fourth order problems \cite{morley1968triangular,ming2006morley}. That family was later extended to the case $r = d$ by enriching the full polynomial space $\mathcal{P}_{r+1}$ with higher order bubbles in Wu and Xu \cite{wu2019nonconforming}. Recently, a family of $H^{r+1}$ nonconforming finite elements was established by employing an interior penalty technique for the case $r+1>d$, using the space $\mathcal{P}_{r+1}$ as the shape function space, see Wu and Xu \cite{wu2017pmip}. While in Hu and Zhang \cite{hu2017canonical}, a family of two dimensional $H^{r+1}$ nonconforming elements was constructed on triangular grids, using the space $\mathcal{P}_{2r-1}$ as the shape function space when $r>2$.

If non-polynomials are considered as the shape functions, the \textit{virtual element method} \cite{beirao2014hitchhiker,beirao2013basic} can be used to design both conforming and nonconforming approximations of the space $H^{r+1}$ in any dimension. The interested readers can refer to Antonietti, Manzini and Verani \cite{antonietti2020conforming}, Chen and Huang \cite{chen2020nonconforming}, Huang \cite{huang2020nonconforming2} for relevant virtual element methods.

The neural network is a new method for discretization on partial differential equations. In Xu \cite{xu2020finite}, the \textit{finite neuron method} was proposed, utilizing a generalized ReLU neural network architecture to propose a conforming approximation of the space $H^{r+1}$ for any $r$ in $d$ dimensions.

A relevant topic of $C^r$ finite element methods is spline interpolations or supersplines \cite{chui1990multivariate, alfeld1992dimension}. In Chui and Lai \cite{chui1990multivariate}, the authors first constructed a family of vertex $C^r$ splines on simplicial triangulations in two dimensions by using piecewise polynomials of degrees not greater than $k$ with $k \geq 4r+1$, which is in fact a variant of the Bramble--Zl\'amal family~\cite{bramble1970triangular}. Then they formally extended their approach to constructing vertex $C^r$ splines on simplicial triangulations in any dimension by using piecewise polynomials of degrees not more than $k$ with $k\geq 2^dr+1$. In Alfeld, Schumaker, and Sirvent~\cite{alfeld1991structure}, it was proved that when the polynomial of degrees $k \ge 2^dr+1$ there exists a structure of the spline spaces that allow the construction of a minimally supported basis. However, no degrees of freedom were proposed therein, which is a key of finite element methods. The reference \cite{alfeld1992dimension} is a following-up paper of \cite{alfeld1991structure}, whose main result is, by using Berstein--B\'ezeir techniques, to show the existence of the local basis of spline spaces on simplicial triangulations for $d \le 3$ with $k \ge 2^dr+1$. The construction of the supersplines on the Alfeld or Powell--Sabin split in two and three dimensions can be found in \cite{lai2007spline}.

\subsection{Main result}

In this work, a family of $C^r$ conforming finite element spaces on simplicial triangulations is proposed, using the piecewise polynomials of degree not greater than $k$, with $k \ge 2^dr+1$. The construction generalizes all the conforming elements on simplicial triangulations introduced above, except the $\mathcal P_{8r+1}- C^r$ family introduced in \cite{lai2007spline}.
The main result is summarized in the following theorem.

\begin{theorem}
    \label{thm:informal}
    Given $u \in \mathcal{P}_{k}(K)$, the space of polynomials of degree not greater than $k \ge 2^d r +1 $ over $d$-dimensional simplex $K$, for any $(d-s)$-dimensional subsimplex $F$ of the simplicial triangulation $\mathcal{T}$ ($0\le s \le d$), let $$D^{\theta}:= \frac{\partial^n}{\partial \bm n_1^{\theta_1} \cdots\partial \bm n_s^{\theta_s}} \text{ with } n:= \sum_{i = 1}^{s} \theta_i $$ represent an $n$-th order normal derivative of $u$ on $F$ when $n > 0$, where $\bm n_1,\cdots, \bm n_s$ are orthonormal unit normal vector(s) of $F$. Define the following weighted moments

    \begin{equation}\label{eq:dof-informal} \frac{1}{\vert F \vert}\int_{F} (D^{\theta}u) \cdot v, \quad \forall v \in \mathcal B_{F,n,k}, \text{ if $F$ is not a vertex,} \end{equation}
   
    \begin{equation}\label{eq:dof-informal2}D^{\theta}u(F) \cdot v(F), \quad \forall v \in \mathcal B_{F,n,k}, \text{ if $F$ is a vertex,}
    \end{equation}
    where the bubble space
    
    \begin{equation}
    \label{eq:Bfnk-informal}
    \begin{split}
    \mathcal B_{F,n,k} &:= \Span \big\{ \prod_{i = s}^d \lambda_{F,i}^{\sigma_i}: (\sigma_s,\cdots,\sigma_d) \text{ satisfies \eqref{eq:thm-informal-cond1} and \eqref{eq:thm-informal-cond2}} \big\}  \\ & =  \Span \{ \lambda_{F,s}^{\sigma_s}\cdots \lambda_{F,d}^{\sigma_{d}} : (\sigma_s,\cdots,\sigma_d) \text{ satisfies \eqref{eq:thm-informal-cond1} and \eqref{eq:thm-informal-cond2}} \} 
    \end{split}%
\end{equation}
with $\lambda_{F,i}, i = s,\cdots, d$, the barycenter coordinates of $F$, and the multi-indices $(\sigma_s,\cdots, \sigma_{d})$ satisfy that \begin{equation}\label{eq:thm-informal-cond1}\sum_{i = s}^{d} \sigma_i = k - n \end{equation} and  \begin{equation}\label{eq:thm-informal-cond2}\sigma_{i_1}+\cdots+\sigma_{i_l} > 2^{l+s-1} r - n, ~~\forall \{i_1, \cdots, i_l\} \subsetneq \{s, \cdots, d\},\end{equation} with $ l = 1, \cdots, d-s, $  and $0 \le n \le 2^{s-1}r$. When $s = 0$, let $n = 0$. Henceforth, the production $\prod_{i = s}^d \lambda_{F,i}^{\sigma_i}$ will be simply shortened as $\lambda_{F,s}^{\sigma_s}\cdots\lambda_{F,d}^{\sigma_d}$ for convenience.

    Then this set of degrees of freedom is unisolvent for the shape function space $\mathcal{P}_{k}(K)$, and the resulting finite element space is of $C^r$ continuity.
\end{theorem}

\begin{remark}
Note that the degrees of freedom defined in \eqref{eq:dof-informal} and \eqref{eq:dof-informal2} are conventional and convenient for presentation. The more precise statement of the degrees of freedom of \eqref{eq:dof-informal} should be $\frac{1}{\vert F \vert}\int_{F} (D^{\theta}u) \cdot v_i$, where $v_i, i = 1,2,\cdots,\dim \mathcal B_{F,n,k}$, form a basis of $\mathcal B_{F,n,k}$. 
\end{remark}

The proof is based on an intrinsic decomposition of the associated set of multi-indices, which is similar to that in some spline construction such as \cite{chui1990multivariate}. However, as it will be seen below, this intrinsic decomposition will be used in a completely different way herein. To this end, a refinement of such a decomposition will be proposed, together with some basic properties. This will be discussed in \Cref{sec:intrinsic-decomposition}. Based on the decomposition, two sets of degrees of freedom are constructed in \Cref{sec:dof}. To help the readers get familiar with the notation and the main result, two-dimensional and three-dimensional examples are displayed in \Cref{sec:example}. The proof of unisolvency and continuity is given in \Cref{sec:proof}. 

The rest of the paper discusses some possible generalizations. In \Cref{sec:l2space} some discontinuous finite element spaces are constructed. \Cref{sec:Stokes} shows that the constructed finite element spaces can be used to establish some new two dimensional finite element Stokes complex. While in \Cref{sec:Hdiv}, another finite element smooth de Rham complex is built via constructing a generalized Stenberg element.

\subsection{Notation}
Some conventional notation is summarized here:  $\mathbb{I}_d$ denotes the set $\{0,1,\cdots, d\}$, and $\mathcal T = \mathcal T(\Omega)$ denotes a $d$ dimensional simplicial triangulation (a conforming triangulation) of domain $\Omega$ which can be exactly covered by simplices, $\bm{x}$ denotes a vertex of $\mathcal T$, $K$ denotes an element of $\mathcal T$ with vertices $\bm{x}_0,\cdots,\bm{x}_d$, and $\lambda_i$ denotes the barycenter coordinate associated to vertex $\bm{x}_i$ of $K$, $i = 0,\cdots,d$. When $d = 0$, i.e. $K$ is a vertex $\bm{x}$, define $\lambda_{\bm{x}} = 1$.

Given a subset $\bm I$ of $\mathbb{I}_d$, let $\langle \bm{I} \rangle$ denote the simplex taking vertices $\{ \bm x_{i}: i \in \bm I \}$ as its vertices. Equivalently, $\langle \bm I \rangle$ is the convex hull of $\{\bm x_i~:~ i \in \bm I\}$. Clearly, the mapping $\bm I \mapsto \lrangle{\bm{I}} := \operatorname{conv}(\{\bm x_i:i \in \bm I\})$ defines a bijection between all sub-simplices of $K$ and all nonempty subsets of $\mathbb{I}_d$.

\subsection{Argyris element}
\label{sec:argyris}
To gain some intuition and make the illustration smoother in the following, it is helpful to recall the triangular Argyris element \cite{argyris1968tuba} here. Given $u \in \Pcal_5(K)$, the degrees of freedom in notation of \Cref{thm:informal} are given as follows:

\begin{itemize}
\item[-]  The function value, first and second order derivatives of $u$ at each vertex $\bm x$ of element $K$.\vspace{1em}

For this set of degrees of freedom, the integer $n$ in \Cref{thm:informal} takes 0, 1, and 2. The corresponding bubble function spaces from \Cref{thm:informal} are $ \mathcal{B}_{\bm x, 0, 5} = \Span\{\lambda_{\bm x}^5\}, \mathcal{B}_{\bm x, 1, 5} = \Span\{\lambda_{\bm x}^4\},  \mathcal{B}_{\bm x, 2, 5} = \Span\{\lambda_{\bm x}^3\},$ respectively, and will be checked below. Recall that here $\lambda_{\bm x} = 1$ is a function which is defined only at vertex $\bm x$.

Take $\mathcal B_{\bm x,0,5}$ as an example. Since vertex $\bm x$ is of codimension $s = 2$, by definition \eqref{eq:Bfnk-informal}, the bubble space is spanned by $\lambda_{\bm x}^{p}$ for some nonnegative integer $p$. It follows from \eqref{eq:thm-informal-cond1} that $p = 5 - 0 $, while the second condition \eqref{eq:thm-informal-cond2} vacuously holds since $d = s = 2$. Therefore, the bubble space $\mathcal B_{\bm x,0,5} = \Span\{\lambda_{\bm x}^5\}$. Note again that $\lambda_{\bm x}$ is only defined at the vertex with value 1, and hence one can regard $\mathcal B_{\bm x,0,k}$ as the constant function space $\mathbb R$ for $k = 3,4,5$.

\item[-] 
The weighted moment
$$\frac{1}{|e|}\int_e \left(\frac{\partial u}{\partial \bm n}\right) \cdot(\lambda_{e,1}^2\lambda_{e,2}^2)$$
for each edge $e$ of element $K$. Here $\lambda_{e,1}$ and $\lambda_{e,2}$ are the barycenter coordinates of edge $e$. \vspace{1em}

For this set of degrees of freedom, note that $n \le 1$. It suffices to show that $\mathcal B_{e,0,5}$ is an empty set, and  $\mathcal B_{e,1,5} = \Span\{\lambda_{e,1}^2\lambda_{e,2}^2\}$ is a one-dimensional polynomial space.

In fact, for $\mathcal B_{e,0,5}$, by definition it is spanned by $\lambda_{e,1}^p \lambda_{e,2}^q$, and the conditions \eqref{eq:thm-informal-cond1} and \eqref{eq:thm-informal-cond2} are specified as $p + q = 5 - 0 = 5$ and $p, q > 2^{1+1-1} - 0 = 2$, respectively. However, for integers $p$ and $q$, these two conditions contradict with each other, which implies that the bubble space $\mathcal B_{e,0,5}$ is an empty set.

For $\mathcal B_{e,1,5}$, the conditions \eqref{eq:thm-informal-cond1} and \eqref{eq:thm-informal-cond2} are specified as $p + q = 5 - 1 = 4$, and $p , q > 2^{1 + 1 - 1} - 1  = 1$, respectively. Only the pair $(p,q) = (2,2)$ meets the requirements. Therefore, $\mathcal B_{e,1,5} = \Span\{\lambda_{e,1}^2\lambda_{e,2}^2\}$.

\item[-] In each element $K$, the bubble space is spanned by $\lambda_{F,0}^{\sigma_0}\lambda_{F,1}^{\sigma_1}\lambda_{F,2}^{\sigma_2}$ where
$\sigma_0+\sigma_1 +\sigma_2 = 5$ and $\sigma_k > 2^{1+0-1} = 1$ for $k = 0,1,2$. Since no integers $\sigma_0, \sigma_1$, and $\sigma_2$ meet these requirements, the bubble space $\mathcal B_{K,0,5}$ is an empty set. 
\end{itemize}

\begin{figure}[htbp]
    \centering
    \includegraphics[]{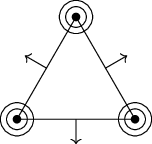}
    \caption{The illustration of degrees of freedom of the Argyris element.}
    \label{fig:example-arg}
\end{figure}
\Cref{fig:example-arg} illustrates the degrees of freedom of the Argyris element. It is well known that the Argyris element has $C^1$ continuity, see \cite{argyris1968tuba,brenner2008mathematical}. It is stressed that the above degrees of freedom are modified from those of the original Argyris element \cite{argyris1968tuba}, as on each edge $e$ of $K$, the value of $\frac{\partial u}{\partial \bm n}$ at the midpoint of $e$ is replaced by a weighted norm, as mentioned above.

\section{Intrinsic Decomposition}
\label{sec:intrinsic-decomposition}
Given a positive integer $k$, this section constructs a decomposition, called an \textit{intrinsic decomposition}, of the set of multi-indices
\begin{equation}
    \Sigma(\mathbb I_d, k) := \{(\alpha_0, \alpha_1, \cdots, \alpha_d) \in \mathbb{N}_0^{d+1} :~ \sum_{i = 0}^{d} \alpha_i = k\},
\end{equation}
which builds a relationship between a set of the geometric components of simplex $K$ and the set of all the multi-indices of degree $k$. 

\subsection{Definition and Assumption}

Given a nonempty index set $\bm I := \{i_0, i_1, \cdots, i_{d'}\} \subseteq \mathbb{N}_0$ and an integer $k \ge 0$, define
\begin{equation}
    \label{eq-sigmaIk}
    \Sigma(\bm{I}, k) := \{(\alpha_{i_0}, \alpha_{i_1}, \cdots, \alpha_{i_{d'}}) \in \mathbb{N}_0^{d'+1}:~ \sum_{i \in \bm{I}} \alpha_i = k\}.
\end{equation}

The decomposition is based on a \textit{continuity vector}  $\bm{r} := (r_1, \cdots, r_d)$ with $r_i$ a nonnegative integer, $i = 1, 2, \cdots, d$. The vector specifies the continuity of piecewise polynomial finite element functions across the internal subsimplices of the given conforming simplicial triangulation $\mathcal T(\Omega)$. For example, in the standard $C^r$ Bramble--Zl\'{a}mal element \cite{bramble1970triangular}, the finite element functions of piecewise polynomials are of $C^{2r}$ continuity across the vertices ($0$-simplices), and are of $C^{r}$ continuity across the internal edges ($1$-simplices) of the simplicial triangulation $\mathcal T(\Omega)$. For the more general case in two dimensions, the continuity vector is taken as $\bm{r} = (r_1, r_2)$. For $d$ dimensions, the component $r_s$ of the continuity vector represents the continuity of the finite element functions of piecewise polynomials when crossing $(d-s)$-dimensional simplices (equivalently, with the codimension $s$). For example, for the standard triangular Argyris element, the continuity vector is chosen as $\bm r = (r_1,r_2) = (1,2)$, since it admits $C^1$ continuity when crossing internal edges while $C^2$ continuity when crossing vertices, by the choice of degrees of freedom. While for the three dimensional \v Zen\'i\v sek element, the continuity vector is chosen as $\bm r = (r_1,r_2,r_3) =  (1,2,4)$.

Throughout this paper, the following assumption is required for the continuity vector $\bm r$, as well as the polynomial degree $k$, unless otherwise specified. Note that the assumption is a sufficient condition for the construction, which seems to appear naturally in the existing attempts in two and three dimensions, while it is still not clear whether it is a necessary condition for the existence of $C^r$ conforming finite element spaces, even for the two-dimensional case.

\begin{assumption}
    \label{assu}
    For the continuity vector $\bm{r} = (r_1, \cdots, r_d)$ with nonnegative integers $r_1, \cdots, r_d$ and the polynomial degree $k$, it holds 
    \begin{equation*}
        r_d \ge 2r_{d-1} \ge 4r_{d-2} \ge \cdots \ge 2^{d-1}r_1
    \end{equation*}
    and
    \begin{equation*}k \ge 2r_d+1.\end{equation*}
\end{assumption}

Given a continuity vector $\bm r$ and a polynomial degree $k$, the intrinsic decomposition is defined recursively as follows. For some technical reason, $r_0 = 0$ is always additionally assumed.

\begin{definition}[An intrinsic decomposition for $\Sigma(\mathbb I_d,k)$]
    \label{defn:decomp}
    Given a continuity vector $\bm{r} = (r_1, \cdots, r_d)$ and a polynomial degree $k$ satisfying \Cref{assu}, a decomposition of $\Sigma(\mathbb I_d,k)$ is defined inductively as follows:
    \begin{equation}
        \begin{multlined}
            \Sigma_d(\mathbb I_d,k) := \{(\alpha_0,\cdots, \alpha_d) \in \Sigma(\mathbb I_d,k) :~ \textrm{ There exists a subset } \bm{N}_d \subseteq \mathbb{I}_d, \\ \textrm{such that }\card(\bm{N}_d) = d
            \textrm{ and } \sum_{i \in \bm{N}_d} \alpha_i
            \le r_d \},
        \end{multlined}
    \end{equation}
    and then,
    \begin{equation}
        \label{eq:defn-sigmas}
        \begin{multlined}
            \Sigma_s(\mathbb I_d,k) := \{(\alpha_0, \cdots, \alpha_d) \in \Sigma(\mathbb I_d,k) :~ \textrm{ There exists a subset } \bm{N}_s \subseteq \mathbb{I}_d, \\ \textrm{such that }\card(\bm{N}_s) = s
            \textrm{ and } \sum_{i \in \bm{N}_s} \alpha_i \le r_s \}~\setminus ~\bigcup_{s' = s+1}^d\Sigma_{s'}(\mathbb I_d,k),
        \end{multlined}
    \end{equation}
    for $s = d-1, \cdots, 1$ sequentially. Finally, set
    \begin{equation}
        \begin{split}
            \Sigma_0(\mathbb I_d,k) & := \Sigma(\mathbb I_d,k)~\setminus~\bigcup_{s'=1}^{d}\Sigma_{s'}(\mathbb I_d,k).        \end{split}
    \end{equation}
\end{definition}

It follows from the definition of $\Sigma_s(\mathbb I_d,k), s = 0,1,\cdots, d$, that
\begin{equation}
    \label{eq:decomp}
    \Sigma(\mathbb I_d,k) = \Sigma_0(\mathbb I_d,k) \cup \Sigma_1(\mathbb I_d,k) \cup \cdots \cup \Sigma_d(\mathbb I_d,k),
\end{equation}
and that any two $\Sigma_s(\mathbb I_d,k)$ and $\Sigma_{s'}(\mathbb I_d,k)$ are disjoint if $s \neq s'$. 
In fact, as can be seen below, a further refined decomposition is needed for the construction and analysis of $C^r$ conforming finite element methods.

It is worth noting that in the definition of \eqref{eq:defn-sigmas}, the latter set on the right hand side in general is not a subset of the former set. To see this, consider a concrete two-dimensional example as follows, (here $\bm r = (r_1,r_2)$)

        \begin{align*}
            \Sigma_2(\mathbb I_2, k) & := \big\{(\alpha_0, \alpha_1, \alpha_2) \in \Sigma(\mathbb I_2, k):~ \alpha_1 + \alpha_2 \le r_2 \textrm{ or } \alpha_2 + \alpha_0 \le r_2 \textrm{ or } \alpha_0 + \alpha_1 \le r_2 \big\},     \\
            \Sigma_1(\mathbb I_2, k) & := \big\{(\alpha_0, \alpha_1, \alpha_2) \in \Sigma(\mathbb I_2, k):~ \alpha_0 \le r_1 \textrm{ or } \alpha_1 \le r_1 \textrm{ or } \alpha_2 \le r_1 \big\}\setminus \Sigma_2(\mathbb I_2, k), \\
            \Sigma_0(\mathbb I_2, k) & := \Sigma(\mathbb I_2, k)\setminus \left(\Sigma_2(\mathbb I_2, k)\cup \Sigma_1(\mathbb I_2, k)\right).
        \end{align*}

For example, with the continuity vector $\bm r = (r_1,r_2) = (1,4)$ and $k = 9$, the multi-index $(2,2,5)$ is in the set $\Sigma_2(\mathbb I_2,9)$, but none of the components is less than 1, that is, it does not belong to the set $$\big\{(\alpha_0, \alpha_1, \alpha_2) \in \Sigma(\mathbb I_2, k):~ \alpha_0 \le r_1 \textrm{ or } \alpha_1 \le r_1 \textrm{ or } \alpha_2 \le r_1 \big\}.$$

\begin{remark}
    \label{rmk:decomp}
    Given a continuity vector $\bm{q} = (q_1, \cdots, q_{d'})$ and a polynomial degree $k$ satisfying \Cref{assu}, for a given index set $\bm{I} = \{i_0, i_1, \cdots, i_{d'}\} \subseteq \mathbb{N}_0$, the decomposition of the set $\Sigma(\bm{I}, k)$ defined in \eqref{eq-sigmaIk} can be defined in a similar way as \Cref{defn:decomp}.
\end{remark}

\begin{remark}
    The last set $\Sigma_0(\mathbb I_d,k)$ of  the intrinsic decomposition can be characterized as:
    \begin{equation*}\begin{split} \Sigma_0(\mathbb I_d,k)  = \big\{(\alpha_0,\cdots,\alpha_d) \in \Sigma(\mathbb I_d,k):~  \alpha_{i_1}+\cdots+\alpha_{i_s} > r_s,~~\forall \{i_1,\cdots,i_s\} \subsetneq \mathbb{I}_d, s = 1,2,\cdots, d\big\}. \end{split}\end{equation*}
\end{remark}

\begin{example}
For the Argyris element, the continuity vector $\bm r = (r_1 ,r_2) = (1,2)$, the polynomial degree $k = 5$, and the intrinsic decomposition defined above reads 
$$\Sigma_0(\mathbb I_2,5) = \emptyset,$$
$$\Sigma_1(\mathbb I_2,5) = \{(1,2,2), (2,1,2),(2,2,1)\},$$
and 
$$\Sigma_2(\mathbb I_2,5) = \Sigma(\mathbb I_2, 5) \setminus \big(\Sigma_0(\mathbb I_2,5)  \cup \Sigma_1(\mathbb I_2,5) \big) .$$

Note that the set $\Sigma_0(\mathbb I_2,5)$ {is used to define} the degrees of freedom inside each element $K$ (thus no degrees of freedom are assigned inside element $K$), while the set $\Sigma_1(\mathbb I_2,5)$ indicates the degrees of freedom on edges (thus three degrees of freedom are assigned on edges in total).

\end{example}

\subsection{A refined intrinsic decomposition}

Recall the main result in \Cref{thm:informal}, the bubble function space inside the $d$-dimensional simplex element $K$ is defined by using the index set $\Sigma_0(\mathbb I_d,k)$, with respect to the continuity vector $\bm r = (r_1,\cdots,r_d) = (r,2r,\cdots, 2^{d-1}r)$. This subsection is to define the (in)complete bubble function spaces inside the $(d-s)$-dimensional simplices of $K$ by using the corresponding subsets $\Sigma_s(\mathbb I_d,k)$ of $\Sigma(\mathbb I_d,k)$ for $1\le s \le d$. To this end, a refined intrinsic decomposition of $\Sigma_{s}(\mathbb I_d,k)$ will be further introduced. As a result, each subset of the refined intrinsic decomposition is assigned to a $(d-s)$-dimensional simplex of $K$, indexed as a pair $(\bm N,n)$.

\begin{definition}
    \label{defi-NDelta}
    Let a continuity vector $\bm{r} = (r_1, \cdots, r_d)$ and a polynomial degree $k$ satisfying \Cref{assu} be given. For a given multi-index $\alpha \in \Sigma_{s}(\mathbb I_d,k)$ from \Cref{defn:decomp}, let $\bm{N}(\alpha)$ denote the subset defined in \eqref{eq:defn-sigmas}, that is
    $$\bm{N}_s \subseteq \mathbb{I}_d \text{  such that } \sum_{i \in \bm{N}_s}\alpha_i \le r_s  \text{ and  }\card(\bm{N}_s) = s, $$ choose one if there are multiple possible choices. Notice that here the subscript $s$ is only used to emphasize the cardinality of the set. Then let $$\Delta(\alpha) := \mathbb{I}_d\setminus \bm{N}(\alpha)$$ and $$n(\alpha) := \sum_{i \in \bm{N}(\alpha)}\alpha_i.$$

\end{definition}

Particularly, when $\bm{N}(\alpha) = \emptyset$, let $n(\alpha) = 0$. To adopt this, let $r_0 = 0$ in this section for convenience. Notice that this definition does not conflict with \Cref{assu} since only $r_1,\cdots ,r_d$ are restricted therein.

\begin{remark}
    \label{rmk:Delta-n}
    As it will be seen below, $\bm N(\alpha), \Delta(\alpha)$ and $n(\alpha)$ will be used to define the degree of freedom associated with multi-index $\alpha$. In particular, the set $\bm N(\alpha)$ will be used to define the normal vector(s) involved in the degree of freedom $\varphi_{\alpha}(\cdot)$ of \eqref{eq:fe-dof-local} below, while the set $\Delta(\alpha)$ will be used to locate the associated subsimplex of $\varphi_{\alpha}(\cdot)$. At last, $n(\alpha)$ is the order of the normal derivative taken in $\varphi_{\alpha}(\cdot)$.
\end{remark}

\begin{remark}
    \label{rmk:n-alpha}
    If $\alpha \in \Sigma_s(\mathbb I_d,k)$, under \Cref{assu}, then it holds that $n(\alpha) := \sum_{i\in \bm{N}(\alpha)}\alpha_i \le r_s$ and $\alpha_j > r_{s+1} - n(\alpha) \ge r_s$ for any $j \in \Delta(\alpha).$
\end{remark}

At a first glance, it seems that the choice of $\bm N(\alpha)$ might be too arbitrary to make things in order. Nevertheless, it can be proved that the choice is always unique provided that \Cref{assu} holds.

\begin{proposition}[Uniqueness of $\bm N$ and $\Delta$]
    \label{prop:uni-N-Delta}
    Under \Cref{assu}, for any multi-index $\alpha \in \Sigma_s(\mathbb I_d, k) \subseteq \Sigma(\mathbb I_d,k)$, there exists a unique subset $\bm{N}(\alpha)$ and consequently a unique $\Delta(\alpha) := \mathbb{I}_d \setminus \bm{N}(\alpha)$ such that $$\sum_{i \in \bm{N}(\alpha)}\alpha_i \le r_s \text{ and }\card(\bm{N}(\alpha)) = s.$$
\end{proposition}
\begin{proof}
    Suppose that, there are two different sets $\bm{N}_1$ and $\bm{N}_2$, with $\card(\bm{N}_1) = \card(\bm{N}_2) = s$ and
    \begin{equation*}
        \sum_{i \in \bm{N}_1} \alpha_i \le r_s, \quad \sum_{i \in \bm{N}_2} \alpha_i \le r_s.
    \end{equation*}

    If $s = d$, it holds that $\mathbb{I}_d = \bm N_1 \cup \bm N_2$, hence
    \begin{equation*}
        k = \sum_{i \in \mathbb I_d} \alpha_i \le \sum_{i \in \bm{N}_1} \alpha_i + \sum_{i \in\bm{N}_2} \alpha_i \le r_d + r_d < k,
    \end{equation*}
    which is a contradiction.

    If $s < d$, it holds that $\card(\bm N_1 \cup \bm N_2) \ge s+1$, hence
    \begin{equation*}
        \sum_{i \in \bm{N}_1 \cup \bm{N}_2} \alpha_i \le \sum_{i\in \bm{N}_1} \alpha_i + \sum_{i \in \bm{N}_2} \alpha_i \le r_s + r_s \le r_{\card(\bm N_1 \cup \bm N_2)},
    \end{equation*}
    which contradicts with the choice of $\bm{N}_i$ and $s$. This completes the proof.
\end{proof}

A refined version of the above intrinsic decomposition $\Sigma_0(\mathbb{I}_d, k), \Sigma_1(\mathbb{I}_d, k), \cdots, \Sigma_d(\mathbb{I}_d, k)$ of the set $\Sigma(\mathbb{I}_d, k)$ can be established as follows. Such a refined version is based on proper subsets $\bm{N}$ of $\mathbb{I}_d$ and nonnegative integers $n \in \mathbb{N}_0$, which is a natural consequence of \Cref{rmk:Delta-n}.

\begin{definition}[A refined intrinsic decomposition of $\Sigma(\mathbb I_d,k)$]
    \label{defi:SigmaNn}
    Given a continuity vector $\bm{r}$ and a polynomial degree $k$ with \Cref{assu}, for a proper subset $\bm{N} \subsetneq \mathbb{I}_d$ and an integer $n \in \mathbb{N}_0$, a refined intrinsic decomposition with respect to $\bm r$ and $k$ is defined as
    \begin{equation}
        \Sigma_{\bm{N},n}(\mathbb I_d,k) := \{\alpha \in \Sigma(\mathbb I_d,k): \bm{N}(\alpha) = \bm{N} \textrm{ and } n(\alpha) = n\}.
    \end{equation}
    Here, given $\alpha \in \Sigma(\mathbb{I}_d, k)$, $\bm{N}(\alpha)$ and $n(\alpha)$ are defined in \Cref{defi-NDelta} above. Running over all $(\bm N,n)$ such that $\Sigma_{\bm N,n}$ is not empty leads to the following refined decomposition (a disjoint union)
    \begin{equation} \label{eq:decomp-refine}
        \Sigma(\mathbb I_d,k) = \bigcup_{\bm{N},n} \Sigma_{\bm{N},n}(\mathbb I_d,k).
    \end{equation}

    Given a nonempty proper subset $\bm{N} \subsetneq \mathbb{I}_d$, let $n\le r_{\card(\bm{N})}$ and the multi-index $\theta \in \Sigma(\bm{N},n)$ with the set $\Sigma(\bm{N}, n)$ defined in \eqref{eq-sigmaIk}, define
    \begin{equation}
        \label{eq:Sigma-Nn-theta}
        \Sigma_{\bm{N},n,\theta}(\mathbb I_d,k) = \{\alpha \in \Sigma_{\bm{N},n}(\mathbb I_d,k) :~ \alpha_i = \theta_i, ~\forall i \in \bm{N} \}.
    \end{equation}

\end{definition}

\begin{example}
Consider the refined intrinsic decomposition for the Argyris element, i.e., the refined intrinsic decomposition of $\Sigma(\mathbb I_2,5)$ with respect to the continuity vector $\bm r = (1,2)$. Here the focus is put in the classification of degrees of freedom at vertices and on edges, see \Cref{fig:decomp-arg} for an illustration for the refined intrinsic decomposition of the set $\Sigma(\mathbb I_2, 5)$.
\begin{itemize}
    \item[-]
Consider $\bm x_0$ as a typical vertex. In this case, $\bm N$ will be taken as $\{1,2\}$, and $n$ can be $0,1,2 = r_2$. It follows that
$$ \Sigma_{\{1,2\},0}(\mathbb I_2,5) = \{(5,0,0)\}, \quad  \Sigma_{\{1,2\},1}(\mathbb I_2,5) = \{(4,1,0), (4,0,1)\},$$
and
$$\Sigma_{\{1,2\},2}(\mathbb I_2,5) = \{(3,0,2), (3,1,1), (3,2,0)\},$$
{corresponding to} the zeroth, first and second order derivatives at vertex $\bm x_0$, {respectively}.
\item[-]
Consider $e_0= \langle \bm x_1, \bm x_2\rangle$ as a typical edge. In this case, $\bm N$ will be taken as $\{0\}$, and the possible $n$ can be $0,1 = r_1$. It follows that 
$$\Sigma_{\{0\},0}(\mathbb I_2,5) = \emptyset, \quad  \Sigma_{\{0\},1}(\mathbb I_2,5) = \{(1,2,2)\},$$
which is corresponding to the degrees of freedom on edge $e_0$. That is, no degree of freedom is for the function value, while one degree of freedom is for the first-order normal derivative on each edge.
\end{itemize}
\end{example}

\begin{figure}[htbp]
    \centering
    \includegraphics[width = 12.5cm]{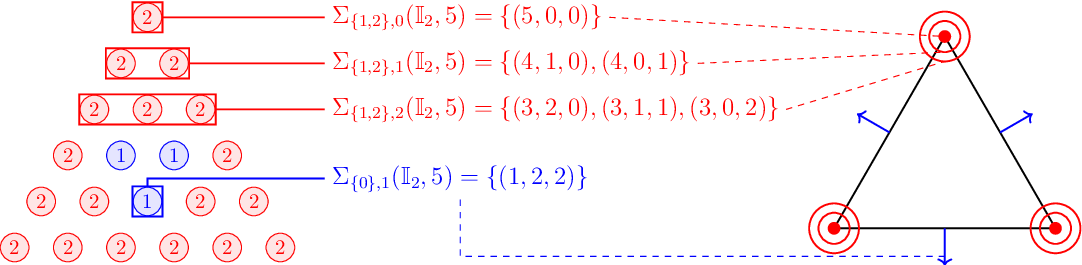}
    \caption{The refined intrinsic decomposition of $\Sigma(\mathbb I_2, 5)$.}
    \label{fig:decomp-arg}
\end{figure}

From \Cref{defi-NDelta} and \Cref{prop:uni-N-Delta}, the following properties hold about this refined decomposition.
\begin{proposition}
    \label{prop:SigmaNn}
    Given a continuity vector $\bm{r} = (r_1,\cdots, r_d)$ and a polynomial degree $k$ satisfying \Cref{assu}, the refined intrinsic decomposition \eqref{eq:decomp-refine} possesses the following properties:
    \begin{enumerate}
        \item These sets $\Sigma_{\bm N,n}(\mathbb I_d,k)$ are disjoint, i.e., $$\Sigma_{\bm{N}, n}(\mathbb{I}_d, k)\cap\Sigma_{\bm{N}', n'}(\mathbb{I}_d, k) = \emptyset\text{ if }(\bm{N},n) \neq  (\bm{N}',n'). $$
        \item For any nonnegative integer $n$, it holds that $$\Sigma_{\bm{N}, n}(\mathbb{I}_d, k)\subseteq \Sigma_{\card(\bm{N})}(\mathbb{I}_d, k). $$
        \item For any pair $(\bm{N}, n)$ such that $n > r_{\card(\bm{N})}$, it holds that $$\Sigma_{\bm{N}, n}(\mathbb{I}_d, k) = \emptyset.$$
    \end{enumerate}

\end{proposition}

From \Cref{prop:SigmaNn}, the refined decomposition \eqref{eq:decomp-refine} of $\Sigma(\mathbb{I}_d, k)$ can be written in more details as
$$\Sigma(\mathbb I_d,k) = \bigcup_{\substack{\bm{N}\subsetneq \mathbb{I}_d \\ n \le r_{\card({\bm{N}})}}} \Sigma_{\bm{N},n}(\mathbb I_d,k),$$
where the range of pairs $(\bm N, n)$ is clarified.

Similar refined decompositions of $\Sigma(\mathbb I_d,k)$ were proposed in the literature, cf., \cite{chui1990multivariate, alfeld1992dimension}, which are used to determine the basis functions of the super spline space. However, no $C^r$ finite element methods were constructed so far.

\begin{remark} In \cite{chui1990multivariate}, the notion $N_{ji}$ therein is a subset of $\Sigma_{d - j}(\mathbb{I}_d, k)$ of this paper, which needs a further decomposition herein.
\end{remark}

It now returns to the construction of degrees of freedom. \Cref{prop:bijection} below, based on the refined intrinsic decomposition in $d$ dimensions, builds a bijection mapping. Such a mapping will be used to define the local degrees of freedom inside subsimplices of element $K$. Consequently, a unified definition of degrees of freedom can be carried out, which is a local version of degrees of freedom. The locality and unification provide many benefits when proving the unisolvency and continuity.
Before the introduction of the bijection mapping, an intrinsic decomposition based on a continuity vector $\bm q = (q_1, \cdots, q_{d'})$ other than $\bm r$ is needed for the set of multi-indices associated to the pair $(\bm I, k')$ where $\bm I := \{i_0, i_1, \cdots, i_{d'}\} \subseteq \mathbb I_d$ and $k' \in \mathbb N_0$. In particular, the last set of this decomposition, denoted as $\Sigma_0^{(\bm q)}(\bm I,k')$, reads
\begin{equation}\label{eq:sigma0q}\begin{split} \Sigma_0^{(\bm q)}(\bm I,k')  =& \big\{(\alpha_{i_0},\cdots,\alpha_{i_{d'}}) \in \Sigma(\bm I,k'):~ \\ &  \alpha_{j_1}+\cdots+\alpha_{j_s} >  q_s,~~\forall \{j_1,\cdots,j_s\} \subsetneq \bm I, s = 1,2,\cdots, d' \big\}. \end{split}\end{equation}

\begin{remark}
    For the case $\card(\bm{I}) = 1$, $\bm{q}$ is an empty continuity vector, and $\Sigma_0^{(\bm{q})}(\bm{I}, k') = \Sigma(\bm{I}, k')$.
\end{remark}

\begin{proposition}
    \label{prop:bijection}
    Given a continuity vector $\bm{r} = (r_1, \cdots, r_d)$ and polynomial degree $k$ with \Cref{assu}, for a nonempty proper subset $\Delta \subsetneq \mathbb{I}_d$, let $\bm{N} := \mathbb{I}_d \setminus \Delta$, $s := \card(\bm{N})$, $n\le r_s$. Then it holds that the mapping
    $$\mathcal{R}_{\bm{N}, \Delta}: \alpha \mapsto (\theta, \sigma) ,$$
    defined by $$\theta_i = \alpha_i,  i\in \bm{N} \text{ and }\sigma_i = \alpha_i, i\in \Delta,$$ is a bijection between $\Sigma_{\bm N, n}(\mathbb I_d,k)$ and $\Sigma(\bm N, n) \times \Sigma_0^{(\bm q)}(\Delta, k - n)$ with $\bm q = \bm q^{s, n} := (r_{s+1} - n, \cdots, r_{d} - n)$.
\end{proposition}

Before proving it, several examples are introduced and discussed, in order to clarify the statement of \Cref{prop:bijection} and its consequence in the main construction (will be shown in \Cref{sec:dof}) of this paper.

\begin{example}
Set $\mathbb I_d = \mathbb I_2 = \{0,1,2\}$, $k \ge 5$, and the continuity vector is chosen as $\bm r = (1,2)$. For $k = 5$, it corresponds to the Argyris element, which has been discussed in \Cref{sec:argyris}. Here, the bubble spaces and the corresponding degrees of freedom are reinterpreted in the language of the refined intrinsic decomposition. Notice that $\bm N$ considered in \Cref{prop:bijection} is a nonempty proper subset, all the possible cases are enumerated below.
\begin{itemize}
    \item[-]  $\card(\bm N) = 1$, and $n = 0$. Suppose that $\bm N = \{0\}$, then 
    $$ \Sigma_{\bm{N}, n}(\mathbb I_2, k) = \Sigma_{\{0\}, 0}(\mathbb I_2, k) := \{(0,p,q) : p+q = k, p, q > r_2 = 2\},$$
    where the restriction of $p$, $q$ comes from the definition of $\bm N$ (more precisely, the sum of any two indices is greater than $r_2 = 2$). In this case, $\Sigma(\bm N, n) = \Sigma(\{0\}, 0) = \{(0)\}$, $\Delta = \{1, 2\}$, $\bm q = (2)$, and it follows from the definition \eqref{eq:sigma0q} that 
    $$ \Sigma_{0}^{(\bm{q})}(\Delta, k - n) = \Sigma_{0}^{(2)}(\{1,2\}, k - 0) := \{(p,q): p+q = k, p, q > 2\}.$$ 
    In this case, it is easy to check the bijection holds. In particular, when $k = 5$, both $\Sigma_{\{0\},0}(\mathbb I_2,5)$ and $\Sigma_{0}^{(2)}(\{1,2\}, 5 - 0)$ are empty, which indicates that there are no degrees of freedom on edge with respect to the function value.
    \item[-] $\card(\bm N) = 1$, and $n = 1$. Suppose that $\bm N = \{0\}$, then 
    $$ \Sigma_{\bm{N}, n}(\mathbb I_2, k) = \Sigma_{\{0\},1}(\mathbb I_2, k) := \{(1,p,q): p+q = k-1, p, q > r_2 - 1 = 1\}.$$
    In this case, $\Sigma(\bm{N}, n) = \Sigma(\{0\},1) = \{(1)\}$, $\Delta = \{1, 2\}$, and $\bm q = (1)$. It follows from \eqref{eq:sigma0q} that
    $$ \Sigma_{0}^{(\bm{q})}(\Delta, k - n) = \Sigma_{0}^{(1)}(\{1,2\}, k - 1) := \{ (p,q): p+q = k-1, p,q > 1\}.$$
    The bijection also holds. In particular, when $k = 5$, all the sets only consist of a single component.
    \item[-] $\card(\bm N) = 2$ and $n \le r_2 = 2$. Suppose that $\bm N=\{1,2\}$ and $\Delta = \{0\}$, then 
    $$ \Sigma_{\bm{N}, n}(\mathbb I_2, k) = \Sigma_{\{1,2\},n}(\mathbb I_2,k) = \{(k-n,\alpha_1,\alpha_2): \alpha_1 + \alpha_2 = n\},$$
    and $\bm q$ is an empty continuity vector. Thus by definition \eqref{eq:sigma0q} 
    $ \Sigma_{0}^{(\bm{q})}(\Delta, k - n) = \Sigma(\{0\},k-n) = \{(k-n)\}$, $\Sigma(\{1,2\}, n) = \{(\alpha_1,\alpha_2), \alpha_1 + \alpha_2 = n\}$. In this case, the bijection also holds.
\end{itemize}
\end{example}
The above example implies that the bijection relationship shown in \Cref{prop:bijection} in two dimensions might be too simple to gain more information since either $\bm N$ or $\Delta$ may be reduced to a set with only one element. To this end, consider the following three-dimensional example, showing some non-triviality of such a relationship. For simplicity, only the statement itself of the proposition is checked in the following, and the corresponding degrees of freedom will be not expanded. A detailed illustration of three-dimensional finite element spaces will be displayed in \Cref{sec:3d-example}.

\begin{example}
\label{exa:decomp-3d}
Let $d= 3$, $k = 13$, and the continuity vector $\bm r = (1,3,6)$. Consider the case where $\bm N = \{0,1\}$ and $\Delta = \{2,3\}$. For $n = 2$ (hence $\bm{q} = (6 - 2) = (4)$), it holds that $\alpha_0 + \alpha_1 = 2$, which implies $(\alpha_0,\alpha_1) = (0,2), (1,1)$ or $(2,0)$. The restriction on $(\alpha_2,\alpha_3)$ then are $2 + \alpha_2, 2 + \alpha_3 > r_3 = 6$, and $2 + \alpha_2 + \alpha_3 = 13.$ A direct enumeration obtains that $\Sigma_{\bm{N}, n}(\mathbb I_3, k) = \Sigma_{\{0,1\},2}(\mathbb I_3, 13)$ is the union of the following three sets
$$ \{ (0,2, \alpha_2,\alpha_3) : \alpha_2 + \alpha_3 = 11,\alpha_2, \alpha_3 > 4\} := \{(0,2,5,6), (0,2,6,5)\},$$
$$ \{ (1,1, \alpha_2,\alpha_3) : \alpha_2 + \alpha_3 = 11,\alpha_2, \alpha_3 > 4\} := \{(1,1,5,6), (1,1,6,5)\},$$
$$ \{ (2,0, \alpha_2,\alpha_3) : \alpha_2 + \alpha_3 = 11,\alpha_2, \alpha_3 > 4\} := \{(2,0,5,6), (2,0,6,5)\}.$$

As a result, the set $\Sigma_{\{0,1\},2}(\mathbb I_3, 13)$ can be decomposed as
$$\Sigma_{\{0,1\},2}(\mathbb I_3,13) = \{(0,2),(1,1),(2,0)\} \times \{(5,6), (6,5)\}.$$
This is exactly what \Cref{prop:bijection} states, since 
$$\Sigma(\bm{N}, n) = \Sigma(\{0,1\}, 2) = \{(0,2),(1,1),(2,0)\},$$
and
$$\Sigma_0^{(\bm q)}(\Delta, k - n) = \Sigma_0^{(4)}(\{2,3\}, 11) = \{(5,6), (6,5)\}.$$

\end{example}

\begin{figure}[htbp]
    \centering
    \includegraphics[width = 6cm]{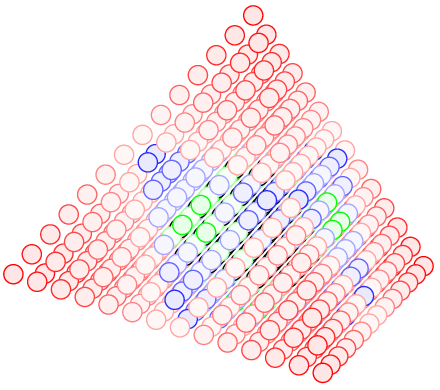} \qquad
    \includegraphics[width = 6cm]{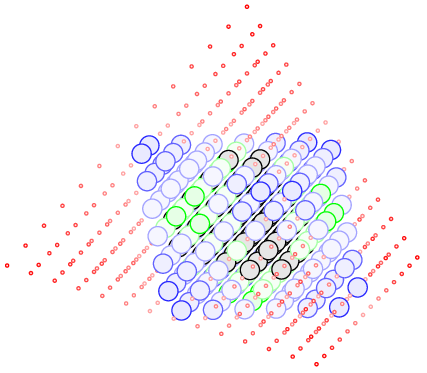} \\
    \includegraphics[width = 6cm]{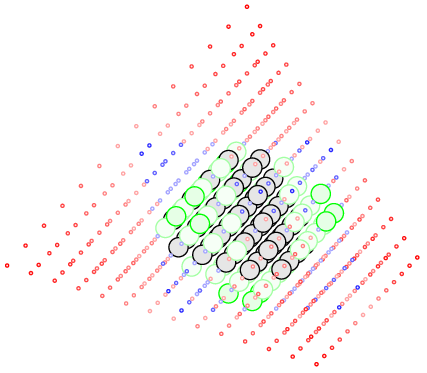} \qquad
    \includegraphics[width = 6cm]{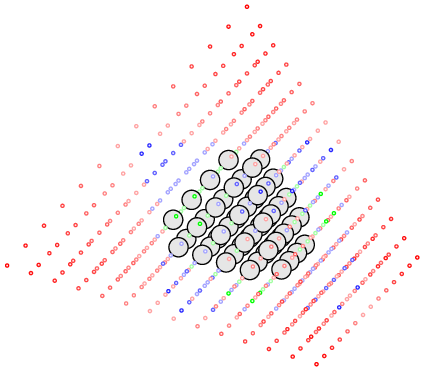}
    \caption{The refined intrinsic decomposition of $\Sigma(\mathbb I_3, 13)$ with $\bm{r} = (1, 3, 6)$.}
    \label{fig:decomp-3D}
\end{figure}

Finally, \Cref{prop:bijection} is proved to close this section. The basic argument in the proof is similar to that of \Cref{exa:decomp-3d}, with more technicality.

\begin{proof}[Proof of \Cref{prop:bijection}]
    For $\alpha \in \Sigma_{\bm{N},n}(\mathbb I_d,k)$ and $(\theta, \sigma) = \mathcal{R}_{\bm{N}, \Delta}(\alpha)$, it holds that
    \begin{equation*}
        \sum_{i\in \bm{N}} \theta_i = \sum_{i\in \bm{N}} \alpha_i = n, \quad \sum_{i\in \Delta} \sigma_i = \sum_{i\in \Delta} \alpha_i = k - n.
    \end{equation*}
    Moreover, for any nonempty subset $\bm{N}'$ of $\Delta$, since $\bm N' \cap \bm N = \emptyset$, it holds that
    \begin{equation*}
        \sum_{i \in \bm{N}'} \sigma_i = \sum_{i \in \bm{N}'} \alpha_i = \sum_{i \in \bm{N}\cup\bm{N}'} \alpha_i - \sum_{i \in \bm{N}} \alpha_i > r_{\card(\bm{N}') + s} - n = q_{\card(\bm{N}')}.
    \end{equation*}
    Hence $\theta$ belongs to $\Sigma(\bm{N},n)$ and $\sigma$ belongs to $\Sigma_0^{(\bm{q})}(\Delta, k - n)$.

    It is straightforward to see that $$\mathcal{R}_{\bm{N}, \Delta} :\Sigma_{\bm{N}, n}(\mathbb I_d,k) \longrightarrow  \Sigma(\bm{N}, n) \times \Sigma_0^{(\bm{q})}(\Delta, k - n) $$ is an injection. Now, it suffices to show that $\mathcal{R}_{\bm{N}, \Delta}$ is surjective.
    For $\theta \in \Sigma(\bm{N}, n)$ and $\sigma \in \Sigma_0^{(\bm{q})}(\Delta,k - n)$, let $\alpha \in \Sigma(\mathbb{I}_d, k)$ such that $\alpha_i = \theta_i$ for $i \in \bm{N}$ and $\alpha_i = \sigma_i$ for $i \in \Delta$. Then it holds that
    \begin{equation*}
        \sum_{i \in \bm{N}} \alpha_i = \sum_{i \in \bm{N}} \theta_i = n \le r_{s}.
    \end{equation*}
    It remains to show for any proper subset $\bm{N}'$ of $\mathbb{I}_d$ with $s + 1 \le \card(\bm{N}') \le d$, $\sum_{i \in \bm N} \alpha_i > r_{\card(\bm{N}')}$. If $\bm{N}$ is a subset of $\bm{N}'$, then it holds that $\card(\bm{N}'\setminus\bm{N}) = \card(\bm{N}') -  s$. It follows that
    \begin{equation*}
        \sum_{i \in \bm{N}'} \alpha_i = \sum_{i \in \bm{N}'\setminus\bm{N}} \sigma_i + \sum_{i \in \bm{N}} \theta_i > q_{\card(\bm{N}') - s} + n = r_{\card(\bm{N}')},
    \end{equation*}
    where the first equation is from the construction of $\alpha$, and  the last equation is from the definition of $\bm q$.
    For the other case, if $\bm{N}$ is not a subset of $\bm{N}'$, then $\card(\bm{N}'\setminus\bm{N}) \ge \card(\bm{N}') -  s + 1$. It follows that
    \begin{equation*}
        \sum_{i \in \bm{N}'} \alpha_i \ge \sum_{i \in \bm{N}'\setminus\bm{N}} \sigma_i > q_{\card(\bm{N}') - s + 1} = r_{\card(\bm{N}') + 1} - n \ge r_{\card(\bm{N}')}.
    \end{equation*}
    In conclusion, it holds that $\alpha \in \Sigma_{\bm{N}, n}(\mathbb{I}_d, k)$ and $\mathcal{R}_{\bm{N}, \Delta}(\alpha) = (\theta, \sigma)$, which proves the surjection part.

\end{proof}

In the following sections, \Cref{prop:bijection} will be frequently used. Simply speaking, the components in the set $\Sigma(\bm{N}, n)$ will correspond to certain (higher-order) derivatives that appear in the degrees of freedom, e.g. $u$, $\frac{\partial u}{\partial \bm n}$, $\frac{\partial^2 u}{\partial \bm n_1\partial \bm n_2}$, etc. The components in the set $\Sigma_{0}^{(\bm q)}(\Delta, k-n)$ will be used to define the bubble function space inside the subsimplex $\langle \Delta \rangle$. Till now, the relationship between the bubble function spaces and the corresponding components of the refined intrinsic decomposition has been established.

\section{Two sets of degrees of freedom: local and global}
\label{sec:dof}

This section introduces two sets of degrees of freedom, and shows their equivalence. The first  set of degrees of freedom \eqref{eq:fe-dof-local} will more intuitively portray the degrees of freedom on each subsimplex; while the second set of degrees of freedom \eqref{eq:theta-sigma} provides another perspective of \eqref{eq:fe-dof}, which makes sure that the proposed degrees of freedom can be used to design $C^r$ finite element spaces.

Given a subsimplex $F$, the averaging inner product $\frac{1}{\left\vert F \right\vert}\langle f, g \rangle_F$ is defined as $\frac{1}{\left\vert F \right\vert}\int_{F} f \cdot g $ if $F$ is not a vertex, as $f(F)\cdot g(F)$ if $F$ is a vertex. For the sake of clarity, the integral formulation $\frac{1}{\left\vert F \right\vert}\int_{F}f\cdot g$ will be also used when $F$ is a vertex, and should be understood as $f(F)g(F)$.

In what follows, the unit normal vectors of subsimplex $F$ of the element $K$ will be re-indexed for convenience. In particular, given a subsimplex $F := \lrangle{\Delta} := \operatorname{conv}\{\bm{x}_i :~ i\in \Delta\}$ of $K$, the orthonormal outer normal vectors will be denoted as $\bm{n}_{F, i}$, $i \in \bm{N} := \mathbb{I}_d\setminus \Delta$. A specific choice of these normal directions does not affect the construction and result, since the span of these normal vectors is the same space which is perpendicular to subsimplex $F$. However, the re-indexing can make the following proof more concise.
The readers might recall the definition of $\bm N(\alpha), n(\alpha), \Delta(\alpha)$ from Definition~\ref{defi-NDelta}.

\begin{definition}[Two sets of degrees of freedom]
    Given a continuity vector $\bm{r} = (r_1, \cdots, r_d)$ and a polynomial degree $k$ with \Cref{assu}, two sets of degrees of freedom are introduced as follows. Note that unless otherwise specified,  $\Delta$ and $\bm N$ are dependent on $\bm r$.
    \begin{enumerate}
        \item For $\alpha \in \Sigma(\mathbb I_d,k)$, let $\Delta = \Delta(\alpha)$ and $F := \lrangle{\Delta} $, let $\bm N = \bm N(\alpha)$ and $n = n(\alpha) := \sum_{i \in \bm{N}}{\alpha_i}$. For $u \in \mathcal P_k(K)$, define
              \begin{equation}
                  \label{eq:fe-dof-local}
                  \varphi_{\alpha} : u \longmapsto \frac{1}{\left\vert F\right\vert } \bigg \langle \frac{\partial^{n}}{ \prod_{i\in \bm{N}}\partial \bm{n}_{F, i}^{ \alpha_i}} u\big\vert _{F} , \bm{\lambda}^{\Delta}\alpha \bigg \rangle_{F},
              \end{equation}
              where $\bm \lambda^{\Delta}\alpha := \prod_{i  \in \Delta} \lambda_i^{\alpha_i}$. There is no distinction among $\lambda_i$, $\lambda_{K, i}$ and $\lambda_{F, i}$, since $\lambda_{K, i}\vert _F = \lambda_{F, i}$ for $F \subseteq K$.
        \item Given a nonempty proper subset $\Delta \subsetneq \mathbb{I}_d$ and $F := \lrangle{\Delta}$, let $\bm{N} := \mathbb{I}_d \setminus \Delta$, $s := \card(\bm{N})$, $n\le r_s$, and $\bm q = \bm q^{s, n}:= (r_{s+1} - n, \cdots, r_{d} - n)$. With $\theta \in \Sigma(\bm{N},n)$ and $\sigma \in \Sigma_0^{(\bm{q})}(\Delta, k-n)$ {defined in \eqref{eq:sigma0q}}, for $u \in \mathcal P_k(K)$, define
              \begin{equation}
                  \label{eq:theta-sigma}
                  \varphi_{\theta,\sigma}: u \longmapsto \frac{1}{\left\vert F\right\vert } \bigg \langle \frac{\partial^{n}}{ \prod_{i\in \bm{N}}\partial \bm{n}_{F, i}^{\theta_i}} u\big\vert _{F} , \bm{\lambda}^{\Delta}\sigma \bigg \rangle_{F},
              \end{equation}
              where $\bm \lambda^{\Delta}\sigma := \prod_{i \in \Delta} \lambda_i^{\sigma_i}$. Moreover, for $\Delta = \mathbb I_d$ and $\sigma \in \Sigma_0(\mathbb{I}_d, k)$ (which implies that $\bm{N}(\sigma)$ is empty), define $\varphi_{\emptyset, \sigma} := \varphi_{\sigma}$ as in \eqref{eq:fe-dof-local}.
              Here, for $\bm{r}$ and $k$ with \Cref{assu}, it is straightforward to show that both $\bm{q}^{s, n}$ and $k - n$ satisfy \Cref{assu}. Note that this is exactly the form in \Cref{thm:fe}.
    \end{enumerate}

\end{definition}

\begin{proposition}
    \label{prop:linear-comb}
    Under \Cref{assu}, for any nonempty proper subset $\Delta \subsetneq \mathbb{I}_d$, let $\bm{N} := \mathbb{I}_d \setminus \Delta$, $s := \card(\bm{N})$, $n\le r_s$, and $\bm q = \bm q^{s, n} := (r_{s+1} - n, \cdots, r_{d} - n)$. Then the following two sets of degrees of freedom
    \begin{equation}
        \label{eq:dof:varphi-alpha}
        \big\{ \varphi_\alpha :~ \alpha \in \Sigma_{\bm N, n}(\mathbb I_d,k) \big\}
    \end{equation}
    and
    \begin{equation}
        \big\{ \varphi_{\theta, \sigma} :~ \theta \in \Sigma(\bm N, n), \sigma \in \Sigma_0^{(\bm q)}(\Delta, k-n) \big\}
    \end{equation}
    coincide with each other. Moreover, the relationship can be written down explicitly, let $(\theta, \sigma) = \mathcal{R}_{\bm N(\alpha), \Delta(\alpha)} (\alpha)$, then it holds that $\varphi_\alpha = \varphi_{\theta, \sigma}$.
\end{proposition}

\begin{proof}
    From \Cref{prop:bijection}, the mapping $\mathcal{R}_{\bm{N},\Delta}: \alpha \mapsto (\theta, \sigma)$ is a bijection between $\Sigma_{\bm N, n}(\mathbb I_d,k)$ and $\Sigma(\bm N, n) \times \Sigma_0^{(\bm q)}(\Delta, k - n)$. The proposition {can be} immediately proved by this bijection.
\end{proof}

In what follows, a global version of degrees of freedom will be proposed.
Given a subsimplex $F$ with codimension $s$, it is clear that there are $s$ pairwise {orthonormal} normal vectors of $F$, denoted as $\bm n_{F, 0}, \cdots, \bm n_{F, s-1}$. Without loss of generality,
set $F = \langle \bm x_{s},\cdots,\bm x_{{d}}\rangle:= \operatorname{conv}(\{\bm x_s,\cdots, \bm x_d\})$ with codimension $s$. Define a bubble function space on $F$ (associated with the continuity vector  $\bm r$ and polynomial degree $k$) by

\begin{equation}
\label{eq:Bfnk}
\begin{split}
    \mathcal{B}_{F,n,k} & := \Span\big\{ \prod_{i = s}^d \lambda_{F, i}^{\sigma_{i}} ~:~ \sigma = {(\sigma_s,\cdots,\sigma_d)}\in \Sigma_0^{(\bm q)}(\mathbb I_{d} \setminus \mathbb I_{s-1}, k-n)\big\} \\ & = \Span\{ \lambda_{F, s}^{\sigma_{s}}\cdots \lambda_{F,d}^{\sigma_{{d}}} ~:~ \sigma = {(\sigma_s,\cdots,\sigma_d)}\in \Sigma_0^{(\bm q)}(\mathbb I_{d} \setminus \mathbb I_{s-1}, k-n)\}.
\end{split}
\end{equation}
Here $\lambda_{F, i}$ $:= \lambda_{K,i}|_F$, where $\lambda_{K,i}$ is the barycenter coordinate associated with vertex $\bm x_i$ of $K$, is also the barycenter coordinate associated to  vertex $\bm x_i$ with respect to $F$, $i = s,\cdots, d$, and the continuity vector
$\bm q = \bm q^{s, n} := (r_{s+1} - n, \cdots, r_{d} - n)$ for $n = 0, 1,\cdots, r_{s}$.

Given $u \in \Pcal_{k}(K)$, the degrees of freedom for the shape function space $\Pcal_k(K)$ are as follows:
\begin{equation}
    \label{eq:fe-dof}
    \frac{1}{\vert F\vert }\langle \bm D^\theta u, v \rangle_F \quad \forall v  \in \mathcal{B}_{F,n,k},
\end{equation}
for all the subsimplices $F$ of $d$-dimensional simplex $K$, and $ n = 0,1,\cdots, r_s$ (when $s = 0$, let $n = 0$).
Here $\bm D^{\theta} u$ represents an $n$-th order normal derivative of $u$ on $F$ when $n > 0$ ($\bm D^{\theta} u = u $ when $n = 0$), namely,
$$ \bm D^{\theta} u :=
    \frac{\partial^n}{\prod_{i = 0}^{s-1} \partial \bm n_{F, i}^{\theta_i}} u
$$
for some multi-index $\theta \in \Sigma(\mathbb{I}_{s - 1}, n)$, i.e., $n = \sum_{i = 0}^{s-1} \theta_i$. Then by the linearity of the space in \eqref{eq:Bfnk}, the degrees of freedom defined by \eqref{eq:fe-dof} is equivalent to \eqref{eq:theta-sigma}. Note that in fact there are $\dim \mathcal B_{F,n,k} = \card (\Sigma_{0}^{(\bm q)}(\mathbb I_d \setminus \mathbb I_{s-1}, k - n))$ degrees of freedom is defined by either \eqref{eq:fe-dof} or \eqref{eq:theta-sigma}.

The main result is the following unisolvency and $C^{r_1}$ continuity of the constructed finite element spaces. The following theorem makes everything in \Cref{thm:informal} precise.

\begin{theorem}
    \label{thm:fe}
    Given a continuity vector $\bm{r} = (r_1, \cdots, r_d)$ and a polynomial degree $k$ satisfying \Cref{assu}, the degrees of freedom defined in \eqref{eq:fe-dof} are unisolvent for $\mathcal{P}_k(K)$. Moreover, the global finite element space
    \begin{equation*}
        \begin{split}\{ u  \in L^2(\Omega):~u\vert _{K} & \in \mathcal P_k(K), u \text{ is single-valued for each degree of freedom}  \\ &\text{ on any subsimplex }F \text{ of dimension }\le d-1\}
        \end{split}
    \end{equation*}
    lies in $C^{r_1}(\Omega)$.

    Here $u$ is single-valued for each degree of freedom $\varphi_{\theta, \sigma}$ on $F$ means that, for any two $d-$dimensional simplices $K^+$ and $K^-$, sharing common subsimplex $F$, it holds that $\frac{1}{|F|}\langle D^{\theta}u|_{K^+}, v \rangle_{F} = \frac{1}{|F|}\langle D^{\theta}u|_{K^-},v \rangle_{F}$ for all $|\theta| = n$ and $v \in \mathcal B_{F,n,k}$.

\end{theorem}

The theorem is the main result of this paper, but the proof is complicated since the definition of the intrinsic decomposition and $\mathcal B_{F,n,k}$ is not straightforward in $d$ dimensions. Luckily, in two and three dimensions the characterization can be figured out, which is shown in \Cref{sec:example}. The theorem will be proved in \Cref{sec:proof} below.

\section{Examples in two and three dimensions}
\label{sec:example}

This section provides concrete examples in two and three dimensions. The following remark is also useful in the following, telling that when the bubble function spaces can be regarded as a complete polynomial bubble function space. 
Here completeness is a conventional mathematical notion. A polynomial bubble function space $\mathcal B$, defined on the subsimplex $F$ (with codimension $s$), is {\it complete} if there exist non-negative integers $r'$ and $k'$ such that $\mathcal B = (\lambda_{F,s}\cdots\lambda_{F,d})^{r'} \mathcal P_{k'}(F)$. On the contrary, $\mathcal B$ is incomplete if there do not exist such $r'$ and $k'$. 

    \begin{remark}
        For the continuity vector $\bm r$, if $r_s + 1 \le s(r_1+1)$ holds for $s = 1,\cdots, d$, then $$\Sigma_0(\mathbb I_d,k) = \{ (r_1 + 1 + \beta_0,\cdots, r_1 + 1 + \beta_d):\beta \in \Sigma(\mathbb{I}_d,k-(d+1)(r_1+1)) \}.$$
        As a result, it holds that 
        $$\mathcal{B}_{K,0,k} = \Span\{\lambda_{0}^{\sigma_0}\cdots \lambda_{d}^{\sigma_d}:~\sigma = {(\sigma_0,\cdots,\sigma_d)} \in \Sigma_0(\mathbb I_d,k)\} = (\lambda_0\cdots\lambda_d)^{r_1+1}\Pcal_{k-(d+1)(r_1+1)}.$$
    \end{remark}
    Under \Cref{assu}, the condition $r_s + 1 \le s(r_1+1)$ cannot hold in general. In fact, if $r_1 = 1$, then both $r_s \le 2s$ and $r_s \ge 2^{s-1}$ imply that $d \le 3$. Therefore, only for lower dimensional cases, the polynomial bubble function spaces can be possible to be complete. Such a situation will become more complicated for higher dimensions and the case of higher continuity. This, in some sense, explains the challenge of the construction of $C^r$ finite element spaces in any dimension.

\subsection{\texorpdfstring{$C^r$}{Cr} finite element spaces in two dimensions}
First, recall the two-dimensional Bramble--Zl\'{a}mal element \cite{bramble1970triangular}, which possesses $C^r$ continuity. The shape function space is taken as $\Pcal = \Pcal_k(K)$ of the space of polynomials of degree $\le k$ for $k \ge 4r+1$. Compared to the original paper \cite{bramble1970triangular}, a different but equivalent {set of} degrees of freedom is proposed herein. 

Given $u \in \Pcal_k(K)$, the degrees of freedom in the notation of \Cref{thm:informal} are as follows:
\begin{itemize}
\item[-] The function value, first, second, $\cdots$, $(2r)$-th order derivatives of $u$ at each vertex $\bm x $ of element $K$, the corresponding bubble function spaces are 
$$\mathcal B_{\bm x,n,k} = \Span\{\lambda_{\bm x}^{k-n}\} \text{ for } 0 \le n \le 2r,$$
where $\lambda_{\bm x}$ is the associated barycenter coordinate of vertex $\bm x$. This set of degrees of freedom is in fact defined by the set of multi-indices $\Sigma_2(\mathbb I_2,k)$.
\item[-] The weighted moment(s) 
$$
\frac{1}{|e|}\int_e \left(\frac{\partial^n u}{\partial \bm n^n} \right) \cdot v,\qquad \forall v \in\mathcal{B}_{e,n,k}$$
on each edge $e$ of element $K$ for $0 \le n \le r$. Here $\bm n$ is the unit normal vector of edge $e$, and the bubble function spaces $\mathcal B_{e,n,k}$ read

$$
\mathcal{B}_{e,n,k} = \Span\{\lambda_{e,1}^{\sigma_1}\lambda_{e,2}^{\sigma_2}\} = (\lambda_{e,1}\lambda_{e,2})^{2r+1-n} \Pcal_{k+n-2(2r+1)},$$
where $\sigma_1 + \sigma_2 = k - n$ and $\sigma_1, \sigma_2 \ge 2r+1 - n$.
This set of degrees of freedom is {defined by} the set of multi-indices $\Sigma_1(\mathbb I_2,k)$.
\item[-] The weighted moment(s)
$$ \frac{1}{\vert K\vert } \int_{K} u \cdot v \quad \forall v \in \mathcal{B}_{K,0,k} ={ \Span\{\lambda_{0}^{\sigma_0}\lambda_1^{\sigma_1}\lambda_{2}^{\sigma_2}:~\sigma \in \Sigma_0(\mathbb I_2,k)\}}.$$
This set of degrees of freedom is {defined by} the set of multi-indices $\Sigma_0(\mathbb I_2,k)$.
\end{itemize}

\Cref{fig:example-2d} illustrates the cases when $r = 1,k = 6$ and $r = 2, k = 9$. The general case in two dimensions can be obtained by replacing $(r,2r)$ by $(r_1,r_2)$, see \Cref{fig:decomp-2d}.

\begin{figure}[htbp]
    \centering
    \includegraphics[width = 4cm]{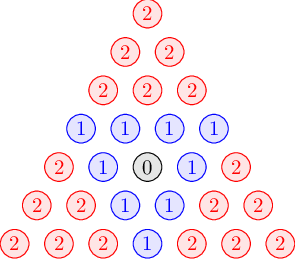} \qquad \quad \ 
    \includegraphics[width = 4cm]{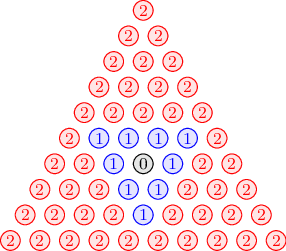} \\
    \vspace{5pt}
    \quad\includegraphics[]{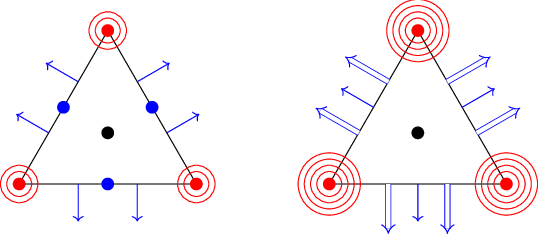}
    \caption{The illustration for {the} degrees of freedom in two dimensions, when $r = 1$, $k = 6$ and $r = 2$, $k = 9$.}
    \label{fig:example-2d}
\end{figure}

\begin{figure}[htbp]
    \includegraphics[]{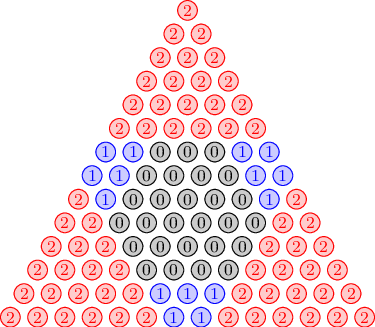}
    \qquad
    \includegraphics[]{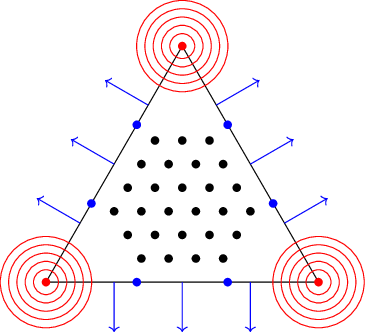}
    \caption{The decomposition of $\Sigma(\mathbb I_2, 13)$ with the continuity vector $\bm r = (1, 5)$ and polynomial degree $k = 13$, and corresponding degrees of freedom.}
    \label{fig:decomp-2d}
\end{figure}

\begin{remark}
\label{rmk:2d-completeness}
Two facts about the completeness are listed below without proof.
\begin{itemize}
        \item[-] The bubble spaces defined on a one-dimensional simplex (edge) are always complete. This holds for any dimension.
    \item[-] In two dimensions, the bubble function space defined on a two-dimensional simplex (face) is complete if and only if $r_2 + 1\le 2(r_1 + 1)$, which implies $\bm r = (r,2r)$ or $\bm r = (r, 2r + 1)$ for some nonnegative $r$.
\end{itemize}
\end{remark}

\subsection{\texorpdfstring{$C^r$}{Cr} finite element spaces in three dimensions}
\label{sec:3d-example}
This subsection provides the $C^r$ finite element spaces, defined in \eqref{eq:fe-dof} for three dimensions. For the case $r = 1$, it recovers the $\mathcal{P}_9-C^1$ \v Zen\' i\v sek element in \v Zen\'i\v sek \cite{vzenivsek1970interpolation}. The reader can also refer to \cite{zhang2009family, zhang2016family} for the $C^1$ and $C^2$ finite element methods in three dimensions, which extend the \v Zen\' i\v sek element, and can be derived from the construction in this paper as well. However, the construction of a family of $\mathcal{P}_{8r + 1}-C^r$ elements for $r \ge 1$, appearing in \cite[Chapter 18.11]{lai2007spline}, is different in nature from that given by this paper. In a very recent work \cite{zhang2022nodal}, Zhang provided a set of explicit basis functions for several $C^r$ finite element methods in three dimensions to verify the finite element methods constructed in this paper.

Given a continuity vector $\bm r = (r_1,r_2,r_3)$ and a polynomial degree $k$ satisfying \Cref{assu}, the degrees of freedom defined in \eqref{eq:fe-dof} for $u \in \Pcal_k(K)$ are as follows.

\begin{itemize}
    \item[-] The function value, first, second, $\cdots$, $r_3$-th order derivatives of $u$ at each vertex $\bm x$ of element $K$, the corresponding bubble function spaces are 
    $$\mathcal B_{\bm x,n,k} = \Span\{\lambda_{\bm x}^{k-n}\} \text{ for } 0 \le n \le r_3,$$
    where $\lambda_{\bm x}$ is the associated barycenter coordinate of vertex $\bm x$. This set of degrees of freedom is in fact defined by the set of multi-indices $\Sigma_3(\mathbb I_3,k)$.
    \item[-] The weighted moments $$\frac{1}{\vert e\vert }\int_{e}\left(\frac{\partial^{p+q}}{\partial \bm{n}_1^p \partial \bm{n}_2^q}u\right)\cdot v  \quad \forall v \in \mathcal{B}_{e,p+q,k} $$
          on each edge $e$ of element $K$, where $p \ge 0, q \ge 0$, $p+q \le r_2$, and 
         $$ \mathcal{B}_{e,p+q,k} = \Span\{\lambda_{e,2}^{\sigma_2} \lambda_{e,3}^{\sigma_3}\}= (\lambda_{e,2}\lambda_{e,3})^{r_3 - p - q}\mathcal{P}_{k-2(r_3+1)+p+q},$$
         with $\sigma_2 + \sigma_3 = k - p - q$ and $\sigma_2, \sigma_3 \ge r_3 - p - q$, where $\lambda_{e,2}$ and $\lambda_{e,3}$ are the two barycenter coordinates with respect to edge $e$. Here, $\bm{n}_1, \bm{n}_2$ are two linearly independent unit normal vectors of edge $e$ and they are perpendicular to each other. This set of degrees of freedom is defined by the set of multi-indices $\Sigma_2(\mathbb I_3,k)$.
    \item[-] The weighted moments
          \begin{equation*}
              \frac{1}{\vert F\vert }\int_{F} \left(\frac{\partial^n u}{\partial \bm n^n}\right) \cdot v~\quad \forall v \in \mathcal{B}_{F,n,k},
          \end{equation*}
          on each face $F$, $n = 0,1,\cdots, r_1$, where $\bm n$ is the unit outer normal vector of face $F$, and 
          $$\mathcal{B}_{F,n,k} = \Span\{\lambda_{F,1}^{\sigma_{1}}\lambda_{F,2}^{\sigma_2}\lambda_{F,3}^{\sigma_3} : (\sigma_1,\sigma_2,\sigma_3) \in \Sigma_0^{(r_2 - n, r_3 - n)}(\bm I,k - n)\},$$
          with $\bm I := \{1,2,3\}$, where $\lambda_{F,1},\lambda_{F,2}$ and $\lambda_{F,3}$ are the three barycenter coordinates with respect to face $F$. This set of degrees of freedom is defined by the set of multi-indices $\Sigma_1(\mathbb I_3,k)$.

    \item[-] The weighted moment(s) inside element $K$,
          \begin{equation*}
              \frac{1}{\vert K\vert} \int_{K} u \cdot v \quad \forall v \in \mathcal{B}_{K,0,k},
          \end{equation*}
          $$\mathcal{B}_{F,n,k} = \Span\{\lambda_{0}^{\sigma_{0}}\lambda_{1}^{\sigma_1}\lambda_{2}^{\sigma_2}\lambda_{3}^{\sigma_3} :  (\sigma_0,\sigma_1,\sigma_2,\sigma_3) \in \Sigma_0(\mathbb I_3,k)\},$$
          where $\lambda_0,\lambda_1, \lambda_2$ and $\lambda_3$ are the barycenter coordinates with respect to element $K$. This set of degrees of freedom is defined by the set of multi-indices $\Sigma_0(\mathbb I_3,k)$.

\end{itemize}

Consider a specific case $\bm{r} = (4, 8, 16)$ and $k = 33$. Given an element $K$, the shape function space is $\mathcal P_k(K)$. The number of degrees of freedom defined above are as follows:

\begin{enumerate}
    \item At each vertex, the number of degrees of freedom is $\binom{16+3}{3}$. Hence, the total number of degrees of freedom at the four vertices of element $K$ is $4\times\binom{19}{3} = 3876$,
          which is equal to $\card(\Sigma_3(\mathbb I_3,33))$.
    \item On each edge, the number of degrees of freedom is $\sum_{\theta=0}^8 \theta(\theta+1) = 240$. Hence, the total number of degrees of freedom on the six edges of element $K$ is $6\times 240 = 1440$, which is equal to the number of components of the set {$\Sigma_2(\mathbb I_3,33)$}, namely, $\card(\Sigma_2(\mathbb I_3,33))$.
    \item On each face, the number of degrees of freedom is $$\sum_{\theta=0}^4 \card(\Sigma_0^{(8-\theta,16-\theta)}(\mathbb I_2,33-\theta))= 28+45+63+82+102 = 320.$$ Hence,  the total number of degrees of freedom {on} the four faces of element $K$ is $4\times 320 = 1280$, which is equal to $\card(\Sigma_1(\mathbb I_3,33))$.
    \item Inside $K$, the set of degrees of freedom is corresponding to the set $\Sigma_0(\mathbb I_3, 33)$. The number of degrees of freedom inside element $K$ is 544.
\end{enumerate}


The rest of this section considers the bubble function spaces $\mathcal{B}_{F,n,k}$ from \Cref{thm:informal} inside the two dimensional faces $F$ of tetrahedron element $K$ for two lower order cases. Here let $\bm I = \{1,2,3\}$.

In the first case, $\bm r = (1,2,4)$ and $k= 9$. Then the corresponding bubble function spaces read
            $$\mathcal{B}_{F,0,9} = \Span\{\lambda_{F,1}^{\sigma_1}\lambda_{F,2}^{\sigma_2}\lambda_{F,3}^{\sigma_3} : (\sigma_1,\sigma_2,\sigma_3) \in \Sigma_0^{(2,4)}(\bm I,9) \} = (\lambda_{F,1}\lambda_{F,2}\lambda_{F,3})^3 \Pcal_{0}(F),$$
            and 
            $$\mathcal{B}_{F,1,9} =  \Span\{\lambda_{F,1}^{\sigma_1}\lambda_{F,2}^{\sigma_2}\lambda_{F,3}^{\sigma_3} : (\sigma_1,\sigma_2,\sigma_3) \in \Sigma_0^{(1,3)}(\bm I,8) \}  = (\lambda_{F,1}\lambda_{F,2}\lambda_{F,3})^2 \Pcal_{2}(F).$$

          In the second case, $\bm r = (2,4,8)$ and $k= 17$, the associated bubble function spaces as as follows,
            $$\mathcal{B}_{F,0,17} = \Span\{\lambda_{F,1}^{\sigma_1}\lambda_{F,2}^{\sigma_2}\lambda_{F,3}^{\sigma_3} : (\sigma_1,\sigma_2,\sigma_3) \in \Sigma_0^{(4,8)}(\bm I,17) \}= (\lambda_{F,1}\lambda_{F,2}\lambda_{F,3})^5 \Pcal_{2}(F),$$
            $$\mathcal{B}_{F,1,17} =  \Span\{\lambda_{F,1}^{\sigma_1}\lambda_{F,2}^{\sigma_2}\lambda_{F,3}^{\sigma_3} :(\sigma_1,\sigma_2,\sigma_3) \in \Sigma_0^{(3,7)}(\bm I,16) \}= (\lambda_{F,1}\lambda_{F,2}\lambda_{F,3})^4 \Pcal_{4}(F_l),$$
            and
            $$\mathcal{B}_{F,2,17}  =  \Span\{\lambda_{F,1}^{\sigma_1}\lambda_{F,2}^{\sigma_2}\lambda_{F,3}^{\sigma_3} : \Sigma_0^{(2,6)}(\bm I,15) \} = (\lambda_{F,1}\lambda_{F,2}\lambda_{F,3})^3 \Big(\Pcal_{6}(F) \setminus \Span\{\lambda_{F,0}^6,\lambda_{F,1}^6, \lambda_{F,2}^6\}\Big).$$
Note that the last bubble function space $\mathcal{B}_{F,2,17}$ cannot be regarded as a complete polynomial bubble function space.

\begin{remark}
The facts about the completeness of the bubble function spaces in three dimensions are listed below without proof:
\begin{itemize}
    \item[-] The bubble space defined on a two-dimensional simplex $F$ with respect to the $n-$th order normal derivative $(n \le r_1)$ is complete if and only if $r_3 = 2r_2 - n$ or $r_3 = 2r_2 -n + 1$, see \Cref{rmk:2d-completeness}. Therefore, the bubble function space can never be complete when $n \ge 2$.
    \item[-] In particular, when $n = 0$, the bubble function space on $F$ is complete if and only if $r_3 = 2r_2 $ or $r_3 = 2r_2 + 1$. When $n = 1$, the bubble function space on $F$ is complete if and only if $r_3 = 2r_2$. 
    \item[-] The bubble function space defined on a three-dimensional simplex (cell) is complete if and only if $\bm r = (0, 0, 0), (0, 0, 1), (0, 1, 2), (1, 2, 4), (1, 2, 5)$ or $(2, 4, 8)$. Hence, in almost all cases, the bubble function spaces are incomplete.
\end{itemize}
\end{remark}

\section{Proof of \Cref{thm:fe}: Unisolvency and Continuity}
\label{sec:proof}
This section proves \Cref{thm:fe}.
Recall the following refined decomposition of $\Sigma(\mathbb{I}_d, k)$
\begin{equation*}
    \Sigma(\mathbb I_d,k) = \bigcup_{\bm{N},n}\Sigma_{\bm{N},n}(\mathbb I_d,k),
\end{equation*}
with the set $\Sigma_{\bm{N}, n}(\mathbb{I}_d, k)$ defined in \Cref{defi:SigmaNn} above. Hence, a direct sum decomposition of the shape function space $\mathcal P_k(K)$ is as follows
\begin{equation}
    \label{eq:PdecomNn}
    \mathcal P_k(K) = \bigoplus_{\bm{N},n}\mathcal{P}_{\bm{N},n}.
\end{equation}
Here, 
$$\mathcal{P}_{\bm{N},n} := \Span\left\{\prod_{i\in \mathbb{I}_d}\lambda_i^{\alpha_i}:~ \alpha \in \Sigma_{\bm{N},n}(\mathbb I_d,k) \right\}.$$
This decomposition implies for any $u\in\mathcal{P}_k(K)$, there is a unique decomposition $u = \sum_{\bm{N},n} u_{\bm{N},n}$ with $u_{\bm{N},n} \in \mathcal{P}_{\bm{N},n}$, $\bm{N} \subsetneq \mathbb{I}_d$, $n \le r_{\card(\bm{N})}$.

The proof of unisolvency is based on an induction argument. To this end, an order of the pairs $(\bm{N}, n)$ will be defined below.
\begin{definition}[Order of the pair]
    \label{def:index-order}
    For all the pairs $(\bm{N}, n), \bm N \subseteq \mathbb{I}_d, n \in \mathbb{N}_0$, introduce the following order:
    Say $(\bm{N}', n') \preceq (\bm{N}, n)$ if $$\bm{N}' \supsetneq \bm{N}$$  or  $$\bm{N}' = \bm{N} \text{ and } n' \le n.$$
    Say $(\bm{N}', n') \prec (\bm{N}, n)$ if $(\bm{N}', n') \preceq (\bm{N}, n)$ and $(\bm{N}', n') \ne (\bm{N}, n)$.
\end{definition}

Again, consider the Argyris element as an example. For the three pairs $(\{1,2\},0)$, $(\{1,2\},1)$ and $(\{1\},0)$, it holds that $(\{1,2\},0) \prec (\{1,2\},1) \prec (\{1\},0)$.

This order leads to the following key lemma, which in particular tells that $\varphi_{\alpha}(\cdot)$ vanishes for $\Pcal_{\bm N,n}$ if the condition $(\bm N, n) \preceq (\bm N(\alpha), n(\alpha))$ does not hold.

\begin{lemma}[Induction Lemma]
    \label{lem:induction}
    Under \Cref{assu}, for $\alpha, \beta \in \Sigma(\mathbb{I}_d, k)$, let $\bm{N} := \bm{N}(\alpha), n := n(\alpha), \Delta := \mathbb{I}_d\setminus \bm{N}$. If the condition $(\bm{N}(\beta), n(\beta)) \preceq (\bm{N}, n)$ does not hold, then $\varphi_{\alpha}(\bm{\lambda}\beta) = 0$ with $\varphi_{\alpha}(\cdot)$ defined in \eqref{eq:dof:varphi-alpha}, where $\bm{\lambda}\beta := \prod_{i \in \mathbb{I}_d}\lambda_i^{\beta_i}$.
\end{lemma}

\begin{proof}
    To start the proof, note that \Cref{def:index-order} immediately implies that $\bm N$ cannot be empty. It follows from \Cref{defi-NDelta} that $n \le r_{\card(\bm N)}$. Then it can be asserted that $\sum_{i \in \bm N} \beta_i > n$. Otherwise, suppose that $\sum_{i \in \bm{N}}\beta_i \le n$, it holds that $\card(\bm{N}(\beta)) \ge \card(\bm{N})$. Since the condition $(\bm{N}(\beta), n(\beta)) \preceq (\bm{N}, n)$ does not hold, it implies that $\bm N$ is not a subset of $\bm N(\beta)$. Therefore, it follows that $$\card(\bm{N}(\beta) \cup \bm{N}) \ge \card(\bm{N}(\beta)) + 1$$ and
    \begin{equation*}
        \begin{split}
            \sum_{i \in \bm{N}(\beta) \cup \bm{N}} \beta_i &\le \sum_{i \in \bm{N}(\beta)} \beta_i + \sum_{i \in \bm{N}} \beta_i \\ & \le r_{\card(\bm{N}(\beta))} + r_{\card(\bm{N})} \\ & \le 2r_{\card(\bm{N}(\beta))} \le r_{\card(\bm{N}(\beta) \cup \bm{N})},
        \end{split}
    \end{equation*}
    which contradicts with the definition of $\bm{N}(\beta)$, since $\bm{N}(\beta) \cup \bm N$ satisfies the condition in \Cref{defn:decomp}, while $\bm{N}(\beta)$ is the largest {admissible} choice. Here the second inequality is from $\card(\bm{N}(\beta)) \ge \card(\bm{N})$, while the last inequality is from $\card(\bm{N}(\beta) \cup \bm{N}) \ge \card(\bm{N}(\beta)) + 1$.

    Let $F := \lrangle{\Delta} := \mathrm{conv}\{\bm{x}_i :~ i\in \Delta\}$ with the unit normal vectors $\bm n_{F,i}, i \in \bm N$.
    Note that each barycenter coordinate $\lambda_i$ associated with vertex $\bm x_i$ vanishes on $F$ for $i \in \bm{N}$.
    With
    $$\bm \lambda^{\Delta} \beta := \prod_{i \in \Delta} \lambda_i^{\beta_i}, \quad \bm \lambda^{\bm N} \beta := \prod_{i \in \bm N} \lambda_i^{\beta_i},$$
    a direct calculation yields
    \begin{equation}
        \label{eq:single-cal}
        \frac{\partial^{n'}}{ \prod_{i \in \bm{N}}\partial\bm n_{F, i}^{\theta'_i}} \bm{\lambda}^{\bm{N}} \beta \Big|_F= \frac{\partial^{n'}}{ \prod_{i \in \bm{N}}\partial\bm n_{F, i}^{\theta'_i}} \left( \prod_{i \in \bm N}\lambda_i^{\beta_i} \right) \Big|_F = 0 \text{ for } n' \le n \text{ and } \theta' \in \Sigma(\bm{N}, n').
    \end{equation}
    It follows from \eqref{eq:single-cal} and the generalized Leibniz rule that
    \begin{equation*}
        \begin{split} \frac{\partial^{n}}{ \prod_{i \in \bm{N}}\partial\bm n_{F, i}^{\alpha_i}} \bm{\lambda} \beta \Big|_F  = \sum_{n' \le n, \theta' \in \Theta_{\bm{N}, n', \theta}} \binom{\theta}{\theta'} \left( \frac{\partial^{n'}}{ \prod_{i \in \bm{N}}\partial\bm n_{F, i}^{\theta'_i}} \bm{\lambda}^{\bm{N}} \beta \right) \left( \frac{\partial^{n - n'}}{\prod_{i \in \bm{N}}\partial\bm n_{F, i}^{\theta_i - \theta'_i}} \bm{\lambda}^{\Delta} \beta \right) \Big|_F= 0.
        \end{split}
    \end{equation*}
    Here $\theta$ is the first component of the pair $(\theta, \sigma) = \mathcal{R}_{\bm{N}, \Delta}(\alpha)$ defined in \Cref{prop:bijection}, and the set $\Theta_{\bm{N}, n', \theta} := \{\theta' \in \Sigma(\bm{N}, n') :~ \theta'_i \le \theta_i, i \in \bm{N}\}$. This implies that $\varphi_{\alpha}(\bm{\lambda}\beta) = 0$.
\end{proof}

The following lemma is also crucial in the proof of unisolvency, indicating that for the subspace $\mathcal{P}_{\bm N, n}$ of the shape function space, the degrees of freedom $\{\varphi_{\alpha}(\cdot) : \alpha \in \Sigma_{\bm N, n}(\mathbb I_d,k)\}$ are unisolvent.

\begin{lemma}[Unisolvency for the subspace $\Pcal_{\bm N,n}$]
    \label{lem:uniNn}
    Under \Cref{assu}, for any nonempty subset $\Delta \subseteq \mathbb{I}_d$ and $F := \lrangle{\Delta} := \mathrm{conv}\{\bm{x}_i:~ i\in \Delta\}$, let $\bm{N} := \mathbb{I}_d \setminus \Delta$, $n\le r_{\card(\bm{N})}$ and $u_{\bm{N},n} \in \Pcal_{\bm N,n}$. If  $\varphi_{\alpha}(u_{\bm{N},n}) = 0$ for all $\alpha \in \Sigma_{\bm N, n}(\mathbb I_d,k)$, then $u_{\bm{N},n} = 0$.
\end{lemma}

\begin{proof}
    First, consider the case $\bm{N} \ne \emptyset$. A new basis of the space $\Pcal_{\bm N,n}$ corresponding to the set $\{\bm{n}_{F, i} : i \in \bm N\}$ will be introduced.
    This new basis of the space $\Pcal_{\bm N,n}$ depends on a new basis $\{\nu_i:~ i \in \bm N\}$ of the space $\Span\{\lambda_i :~ i \in \bm N\}$, such that
    $$\grad \nu_i = \bm{n}_{F, i},\quad \forall i \in \bm{N}.$$
    In fact, the new basis $\{\nu_i:~ i \in \bm N\}$ can be constructed as follows.

    Suppose $\bm{m}_I$ is the unique (outer) normal vector of codimension 1 subsimplex $\lrangle{\mathbb{I}_d\setminus \{I\}} := \mathrm{conv}\{\bm{x}_i:~ i\in \mathbb{I}_d\setminus\{I\}\}$ for $I\in \mathbb{I}_d$. It follows from the definition that the set $\{\bm{n}_{F, i} : i \in \bm N\}$ and the set $\{\bm{m}_{i} :~ i \in \bm N\}$ are two bases of the space perpendicular to subsimplex $F$. Then there exist $c_{ij} \in \mathbb R$, $i, j\in\bm{N}$, such that
    \begin{equation*}
        \bm{n}_{F, i} = \sum_{j\in\bm{N}}c^{ij}\bm{m}_j,\quad i\in\bm{N}.
    \end{equation*}
    Clearly, there exists $\xi_i \ne 0$ such that $$\grad \lambda_i = \xi_i\bm{m}_i,\quad  i \in \bm{N}$$ from the definition of $\bm{m}_i$. Define
    \begin{equation*}
        \nu_i := \sum_{j \in \bm{N}} \frac{c^{ij}}{\xi_j}\lambda_j, \quad i \in \bm{N},
    \end{equation*}
    then it holds that
    $$\grad \nu_i = \bm{n}_{F, i},\quad  i \in \bm{N}.$$
    Since these vectors $\bm{n}_{F, i}$, $i\in \bm{N}$, are linearly independent, the functions $\{\nu_i:~ i \in \bm N\}$ form a basis of the space $\Span\{\lambda_i :~ i \in \bm N\}$. Thus, a new basis of $\mathcal{P}_{\bm{N}, n}$ can be defined as follows.

    For $\alpha \in \Sigma_{\bm{N}, n}(\mathbb{I}_d, k)$, define $\bm{\lambda}_{\textrm{nor}}: \Sigma_{\bm{N}, n}(\mathbb{I}_d, k) \to \mathcal{P}_{\bm{N}, n}$ such that
    $$\bm{\lambda}_{\textrm{nor}} \alpha := \bm{\lambda}^{\Delta}\alpha \prod_{i \in \bm{N}} \nu_i^{\alpha_i}.$$
    It follows from \Cref{prop:bijection} that $\{\bm{\lambda}_{\textrm{nor}} \alpha :~ \alpha \in \Sigma_{\bm{N}, n}(\mathbb{I}_d, k)\}$ is a basis of $\mathcal{P}_{\bm{N}, n}$, where
    $\bm{\lambda}^{\Delta}\alpha := \prod_{i \in \Delta}\lambda_i^{\alpha_i}.$
    Hence $u_{\bm{N}, n}$ can be reexpressed as $$u_{\bm{N}, n} := \sum_{\alpha \in \Sigma_{\bm{N}, n}(\mathbb{I}_d, k)} c_{\alpha} \bm{\lambda}_{\textrm{nor}}\alpha$$ for combination parameters $c_{\alpha}$, $\alpha \in \Sigma_{\bm N,n}(\mathbb I_d,k)$. Next, define $$v_{\bm{N}, n, \theta} := \sum_{\alpha \in \Sigma_{\bm{N}, n, \theta}(\mathbb{I}_d, k)} c_{\alpha} \bm{\lambda}^{\Delta}\alpha$$ for $\theta \in \Sigma(\bm{N}, n)$. It follows from \Cref{prop:bijection} that the mapping $\mathcal{R}_{\bm{N}, \Delta}$ is a bijection between $\Sigma_{\bm N, n}(\mathbb I_d,k)$ and $\Sigma(\bm N, n) \times \Sigma_0^{(\bm q)}(\Delta, k - n)$. Hence, it holds that
    \begin{equation*}
        \begin{split}
            u_{\bm{N}, n} &=  \sum_{\alpha \in \Sigma_{\bm{N}, n}(\mathbb{I}_d, k)} c_{\alpha} \bm{\lambda}_{\textrm{nor}}\alpha \\ &= \sum_{\theta \in \Sigma(\bm{N}, n)} \sum_{\alpha \in \Sigma_{\bm{N}, n, \theta}(\mathbb{I}_d, k)} (c_{\alpha} \bm{\lambda}^{\Delta}\alpha \prod_{i \in \bm{N}} \nu_i^{\theta_i})\\ & = \sum_{\theta \in \Sigma(\bm{N}, n)} (v_{\bm{N}, n, \theta} \prod_{i \in \bm{N}} \nu_i^{\theta_i}).
        \end{split}
    \end{equation*}

    Since it holds that
    $$\frac{\partial \nu_i}{\partial \bm{n}_{F, j}} = \delta_{ij}, \quad \forall i,j \in \bm{N},$$ with $\delta_{ij}$ being Kronecker's delta, and that
    $$\frac{\partial \nu_i}{\partial \bm m} = 0,\quad \forall i \in \bm{N}$$ for any vector $\bm{m}$ such that $\bm m \perp \bm n_{F, j}$ for all $j \in \bm N$, it follows that given $\theta \in \Sigma(\bm{N}, n)$, for $\alpha \in \Sigma_{\bm{N}, n, \theta}(\mathbb I_d,k)$ and $\beta \in \Sigma_{\bm{N}, n}(\mathbb I_d,k)$,
    \begin{equation}
        \label{eq:identity-3} \frac{\partial^n}{\prod_{i\in \bm{N}} \partial \bm{n}_{F, i}^{\alpha_i}} \bm{\lambda}_{\textrm{nor}} \beta \big\vert _{F} = \left\{\begin{aligned} 0 , &\quad \beta \notin \Sigma_{\bm{N},n,\theta}(\mathbb I_d,k), \\ \theta! \bm{\lambda}^{\Delta} \beta , &\quad \beta \in \Sigma_{\bm{N},n,\theta}(\mathbb I_d,k), \end{aligned} \right.
    \end{equation}
    where $\Sigma_{\bm N,n,\theta}(\mathbb I_d,k)$ is defined in \eqref{eq:Sigma-Nn-theta} above.

    Now given $\theta \in \Sigma(\bm{N},n)$, for $\alpha \in \Sigma_{\bm{N},n,\theta}(\mathbb I_d,k)$ and $\beta \in \Sigma_{\bm{N},n}(\mathbb I_d,k)$, it follows from \eqref{eq:identity-3} that
    \begin{equation*}
        \varphi_{\alpha}(\bm{\lambda}_{\textrm{nor}} \beta) =
        \left\{\begin{aligned} 0, & \quad \beta \notin \Sigma_{\bm{N},n,\theta}(\mathbb I_d,k), \\ \frac{\theta!}{\left\vert F\right\vert }\langle \bm{\lambda}^{\Delta}\beta, \bm{\lambda}^{\Delta}\alpha \rangle_{F}, & \quad \beta \in \Sigma_{\bm{N},n,\theta}(\mathbb I_d,k).\end{aligned}\right.
    \end{equation*}
    Then the linearity of $\varphi_{\alpha}(\cdot)$ {gives}
    \begin{equation*}
        \begin{split}
            \varphi_{\alpha}(u_{\bm{N},n}) &=   \sum_{\beta \in \Sigma_{\bm{N}, n, \theta}(\mathbb{I}_d, k)} c_{\beta} \frac{\theta!}{\left\vert F\right\vert }\langle \bm{\lambda}^{\Delta}\beta, \bm{\lambda}^{\Delta}\alpha \rangle_{F} \\ & = \frac{\theta!}{\left\vert F\right\vert }\langle v_{\bm{N}, n, \theta}, \bm{\lambda}^{\Delta}\alpha \rangle_{F}.
        \end{split}
    \end{equation*}
    Notice that $\varphi_{\alpha}(u_{\bm{N},n}) = 0$ for $\alpha \in \Sigma_{\bm{N}, n}(\mathbb{I}_d, k)$. This leads to
    \begin{equation}
        0 = \sum_{\alpha \in \Sigma_{\bm{N}, n, \theta}(\mathbb{I}_d, k)}c_{\alpha} \varphi_{\alpha}(u_{\bm{N}, n}) = \frac{\theta!}{\left\vert F\right\vert }\langle v_{\bm{N}, n, \theta}, v_{\bm{N}, n, \theta} \rangle_{F}.
    \end{equation}
    It yields $v_{\bm{N}, n, \theta} = 0$. Since $u_{\bm{N}, n} = \sum_{\theta \in \Sigma(\bm{N}, n)} (v_{\bm{N}, n, \theta} \prod_{i \in \bm{N}} \nu_i^{\theta_i})$, it concludes that $u_{\bm{N}, n} = 0$.

    Second, consider the case $\bm{N} = \emptyset$ and $n = 0$. Then it holds that $\Sigma_{\bm{N}, n}(\mathbb{I}_d, k) = \Sigma_0(\mathbb{I}_d, k)$. Thus, $u_{\bm{N}, n}$ can be rewritten as $u_{\bm{N}, n} = \sum_{\alpha \in \Sigma_0(\mathbb{I}_d, k)}c_{\alpha}\bm{\lambda}\alpha$ for combination parameters $c_{\alpha}, \alpha \in \Sigma_0(\mathbb I_d,k)$. Here
    $$\bm{\lambda}\alpha := \prod_{i \in \mathbb{I}_d} \lambda_i^{\alpha_i}.$$
    Notice that $\varphi_{\alpha}(u_{\bm{N},n}) = 0$ holds for $\alpha \in \Sigma_{\bm{N}, n}(\mathbb{I}_d, k)$. {This yields}
    \begin{equation}
        \begin{split}
            0  & = \sum_{\alpha \in \Sigma_0(\mathbb{I}_d, k)}c_{\alpha} \varphi_{\alpha}(u_{\bm{N}, n})\\ & = \sum_{\alpha \in \Sigma_0(\mathbb{I}_d, k)}c_{\alpha} \frac{\theta!}{\left\vert F\right\vert }\langle u_{\bm{N}, n}, \bm{\lambda} \alpha \rangle_{F} \\ & = \frac{\theta!}{\left\vert F\right\vert }\langle u_{\bm{N}, n}, u_{\bm{N}, n} \rangle_{F}.
        \end{split}
    \end{equation}
    Thus $u_{\bm{N}, n} = 0$.
\end{proof}

Now it is ready to show the unisolvency.

\begin{proposition}[Unisolvency]
    \label{prop:unisolvent}
    Given an element $K \in \mathcal{T}$, the set of degrees of freedom $\{\varphi_{\alpha}(\cdot):\alpha \in \Sigma(\mathbb I_d,k)\}$ defined in \eqref{eq:fe-dof-local}, is unisolvent for the shape function space $\mathcal{P}_k(K)$ if \Cref{assu} is satisfied.
\end{proposition}

\begin{proof}
    From the definition of the intrinsic decomposition of the set $\Sigma(\mathbb I_d,k)$, the dimension of the shape function space and the number of degrees of freedom coincide. Hence it suffices to show if $u \in \mathcal P_k(K)$ vanishes on all degrees of freedom $\varphi_{\alpha}(\cdot)$ for $\alpha \in \Sigma(\mathbb I_d,k)$, then $u = 0$.  Suppose that $\varphi_{\alpha}(u)=0$ for all $\alpha\in\Sigma(\mathbb{I}_d, k)$ for some $u\in\mathcal{P}_{k}(K)$. Recall the decomposition \eqref{eq:PdecomNn} of $\mathcal P_k(K)$, namely $u = \sum_{\bm{N},n}u_{\bm{N},n}$ with $u_{\bm{N},n}\in \mathcal{P}_{\bm{N},n}$.

    The proof is based on performing a mathematical induction. To begin with, consider the minimal element $(\bm N, n)$ with respect to index order $\prec$, i.e., for all $(\bm N', n')$, the condition $(\bm N', n') \preceq (\bm N, n)$ does not hold. By \Cref{lem:induction}, for any $\alpha \in \Sigma_{\bm N,n}(\mathbb I_d,k)$, it follows that
    \begin{equation*}
        0 = \varphi_{\alpha}(u) = \sum_{(\bm N', n') \not\preceq (\bm N,n)}\varphi_{\alpha}(u_{\bm N',n'}) + \varphi_{\alpha}(u_{\bm N, n}) = \varphi_{\alpha}(u_{\bm N, n}).
    \end{equation*}
    Therefore by \Cref{lem:uniNn}, it can be concluded that $u_{\bm N,n} = 0$.

    Suppose that for all $(\bm N', n') \prec (\bm N,n)$, it holds that $u_{\bm N',n'} = 0.$ Then, again, by \Cref{lem:induction}, it follows that
    \begin{equation}
        0 = \varphi_{\alpha}(u) =  \sum_{(\bm N', n') \not\preceq (\bm N,n)}\varphi_{\alpha}(u_{\bm N',n'})  + \sum_{(\bm N', n') \prec (\bm N,n)}\varphi_{\alpha}(u_{\bm N',n'})+ \varphi_{\alpha}(u_{\bm N, n}) = \varphi_{\alpha}(u_{\bm N, n}).
    \end{equation}
    Therefore by \Cref{lem:uniNn}, it can be concluded that $u_{\bm N,n} = 0$.

    It then follows from the mathematical induction on the partial order sets that all $u_{\bm N,n} = 0$. Hence $u = 0$, which implies the unisolvency.
\end{proof}

\begin{proposition}[Continuity]
    \label{prop:continuity}
    Let $F$ be a $(d-1)$-dimensional simplex shared by two $d$-dimensional simplices $K^+$ and $K^-$. Let functions $u^{\pm}$ be defined on $K^{\pm}$, respectively. Suppose that they are compatible on their common degrees of freedom. Then the piecewise polynomial $u$ defined as $u = u^{+}$ on $K^+$ and $u = u^-$ on $K^-$ is of $C^{r_1}(K^+\cup K^-)$.
\end{proposition}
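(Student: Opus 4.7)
The plan is to reduce the question to showing that for every integer $\ell$ with $0 \le \ell \le r_1$, the jump
\begin{equation*}
\phi_\ell \;:=\; \left.\frac{\partial^\ell u^+}{\partial \bm{n}^\ell}\right|_{F} \;-\; \left.\frac{\partial^\ell u^-}{\partial \bm{n}^\ell}\right|_{F} \;\in\; \mathcal{P}_{k-\ell}(F)
\end{equation*}
vanishes identically, where $\bm{n}$ is a unit normal to $F$. Agreement of all tangential derivatives on $F$ follows automatically from $u^+|_F = u^-|_F$, so these normal-jump conditions together yield $u \in C^{r_1}(K^+ \cup K^-)$.

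Fix $\ell \le r_1$. For each codimension-$m$ subsimplex $\overline{\Delta} \subseteq F$ (necessarily $m \ge 1$), orient the normals in Definition~\ref{defi:fe-2} so that $\bm{n}_1 = \bm{n}$ and $\bm{n}_2, \ldots, \bm{n}_m$ span the normals to $\overline{\Delta}$ inside $F$; this is permitted by Proposition~\ref{prop:linear-comb}, which shows that different normal bases yield DOF sets that are mutually linearly reducible. Isolate the sub-collection of DOFs with $\theta_1 = \ell$. Since every subsimplex of $F$ is common to $K^+$ and $K^-$, these are shared DOFs and vanish on $u^+ - u^-$ by the compatibility hypothesis. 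Setting $\tilde\theta := (\theta_2, \ldots, \theta_m)$ and using $|\tilde\theta| = |\theta| - \ell$, such a DOF rewrites on $u^+ - u^-$ as
\begin{equation*}
\frac{1}{|\overline{\Delta}|}\left\langle \frac{\partial^{|\tilde\theta|}}{\partial \bm{n}_2^{\theta_2}\cdots \partial \bm{n}_m^{\theta_m}}\phi_\ell,\; p \right\rangle_{\overline{\Delta}}.
\end{equation*}
With $\bm{r}' := (r_2 - \ell, \ldots, r_d - \ell)$ and $k' := k - \ell$, a direct check yields $|\tilde\theta| \le r_m - \ell = r'_{m-1}$, while the weight space $\Lambda \Sigma_0^{(\bm{q})}(d-m, k-|\theta|)$ coincides with $\Lambda \Sigma_0^{(\bm{q}')}((d-1)-(m-1), k' - |\tilde\theta|)$ via $r_{t+m} - |\theta| = (r_{t+m} - \ell) - |\tilde\theta|$. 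Hence, as $(\theta_2,\ldots,\theta_m,p)$ varies, these functionals trace out exactly the DOFs of Definition~\ref{defi:fe-2} applied to $\phi_\ell$ on $F$ with data $(\bm{r}', k')$, viewing $\overline{\Delta}$ as a codimension-$(m-1)$ subsimplex of $F$.

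Assumption~\ref{assu} is preserved under this shift: $r_{s+1} - \ell \ge 2(r_s - \ell)$ follows from $r_{s+1} \ge 2 r_s$ and $\ell \ge 0$, and $k' \ge 2(r_d - \ell) + 1$ follows from $k \ge 2 r_d + 1$. Invoking Proposition~\ref{prop:unisolvent} in dimension $d-1$ with parameters $(\bm{r}', k')$ shows that the induced DOF set is unisolvent on $\mathcal{P}_{k'}(F)$, forcing $\phi_\ell = 0$. The base case $d = 1$ is immediate because the common vertex DOFs already prescribe all derivatives up to order $r_1$. The main obstacle is the careful bookkeeping in the identification above: checking that codimensions drop consistently by one, that the tangential order budget becomes $r_m - \ell$, that the weight vectors $\bm{q}$ and $\bm{q}'$ match under the shift, and that the extremal case $m = 1$ (where $\overline{\Delta} = F$ itself becomes the top-dimensional piece on the $F$ side) is handled correctly; once this correspondence is established, the unisolvency from the previous subsection closes the argument for each $\ell$.
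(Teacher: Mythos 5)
Your proposal is correct and follows essentially the same route as the paper's proof: for each $\ell\le r_1$ you identify the sub-collection of shared DOFs on $F$ and its subsimplices whose $\bm{n}$-normal order equals $\ell$ with the full DOF set of Definition~\ref{defi:fe-2} for $\mathcal P_{k-\ell}(F)$ in dimension $d-1$ with the shifted parameters $\bm{r}'=(r_2-\ell,\dots,r_d-\ell)$, and then invoke the unisolvency of Proposition~\ref{prop:unisolvent} to kill the jump of the $\ell$-th normal derivative. Your bookkeeping (matching of $\bm q$ with $\bm q'$, the codimension shift, and the reorientation of the normal basis via Proposition~\ref{prop:linear-comb}) is in fact spelled out more explicitly than in the paper, but the argument is the same.
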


\begin{proof}
    It suffices to prove that: if all degrees of freedom $\varphi_{\alpha}(\cdot)$ associated to $F$ (including the degrees of freedom defined inside itself and/or any subsimplex of it, namely $\lrangle{\Delta(\alpha)}$ is a subsimplex of $F$) vanish, then it holds that
    \begin{equation}
        u = \frac{\partial u}{\partial \bm{n}}  = \frac{\partial^2 u}{\partial \bm{n}^2} = \cdots \frac{\partial^{r_1} u}{\partial \bm{n}^{r_1}} = 0 \text{ on } F.
    \end{equation}
    Here $\bm{n}$ is the unit normal vector of $F$. Without loss of generality, assume that $F = \lrangle{\mathbb I_d \setminus \{d\}} = \langle \mathbb I _{d-1}\rangle$. By definition, $\lrangle{\Delta(\alpha)} \subseteq F$ implies that $d \in \bm N(\alpha)$.

    For any nonempty subset $\Delta \subseteq \mathbb{I}_{d-1}$ and $f := \lrangle{\Delta}$, let $\bm{N} := \mathbb{I}_d\setminus \Delta$, $s : = \card(\bm N)$. Consider $\varphi_{\theta, \sigma}$ from \eqref{eq:theta-sigma}, there holds that $\varphi_{\theta, \sigma}(u) = 0$ for $\theta \in \Sigma(\bm N,n)$ and $\sigma \in \Sigma_0^{(\bm q)}(\Delta, k-n)$ defined in \eqref{eq:sigma0q}, where $\bm q = (r_{s+1} - n,\cdots, r_{d} - n)$. Taking $\bm N' = \bm N \setminus \{ d\}, 0 \le l \le r_1$ and $v = \frac{\partial^l}{\partial \bm n^{l}} u$, it then follows that
    \begin{equation}
        \frac{1}{\left\vert f\right\vert } \bigg \langle (\frac{\partial^{n - l}}{ \prod_{i\in \bm{N'}}\partial \bm{n}_{f, i}^{\tilde \theta_i}} v)\big\vert_{f}, \bm{\lambda}^{\Delta}\sigma \bigg \rangle_{f}
    \end{equation}
    vanishes for $\tilde \theta \in \Sigma(\bm N', n - l)$ and $\sigma \in \Sigma_0^{(\bm q)}(\Delta, k-n)$,
    where $\bm \lambda^{\Delta}\sigma := \prod_{i \in \Delta} \lambda_i^{\sigma_i}$.
    It is straightforward to see that the continuity vector $\bm{p}_l = (r_2 - l, r_3 - l,\cdots r_d - l)$ and the polynomial degree $k - l$ satisfies \Cref{assu}. By the unisolvency with respect to the continuity vector $\bm{p}_l$ and the polynomial degree $k - l$, it holds that $v = 0$, implying the continuity.

\end{proof}

\section{Generalizations}
\subsection{Discontinuous Elements}
\label{sec:l2space}
This section discusses a simple case where \Cref{assu} is violated. This yields a construction of discontinuous elements. From now on, the following assumption is considered to replace \Cref{assu}.

\begin{assumption} 
\label{assu2}
For the continuity vector $\bm{r} = (r_1, \cdots, r_d)$ {such that} $r_1, \cdots, r_{t-1} = -1$ {for $1 < t < d$} and the polynomial degree $k$, it holds that
    \begin{equation*}
        r_d \ge 2r_{d-1} \ge 4r_{d-2} \ge \cdots \ge 2^{d-t}r_{t}
    \end{equation*}
    and
    \begin{equation*}k \ge 2r_d+1.\end{equation*}
\end{assumption}

Under \Cref{assu2}, the {corresponding} intrinsic decomposition of $\Sigma(\mathbb{I}_d, k)$ and $\Sigma(\bm{I}, k)$ can be defined in a similar way as \Cref{defn:decomp} and \Cref{rmk:decomp}. In particular, $\Sigma_s(\mathbb{I}_d, k) = \emptyset$ for $1 \le s < t$.

For $\alpha \in \Sigma_s(\mathbb{I}_d, k)$ with $s = 0$ and $s \ge t$, the definitions of $\bm{N}(\alpha)$, $\Delta(\alpha)$ and $\Delta(\alpha)$ are the same as those in \Cref{defi-NDelta}. The uniqueness of $\bm{N}(\alpha)$ and $\Delta(\alpha)$ can be proved by a similar 
{argument} in \Cref{prop:uni-N-Delta}.

\subsection{Stokes Complex in two dimensions}
\label{sec:Stokes}
As an application, consider the following smoothing de Rham complex in two dimensions. Given a conforming triangular grid $\mathcal T = \mathcal T(\Omega)$ of the two-dimensional polygonal domain $\Omega$, denote the global finite element space defined in \Cref{thm:fe} with the continuity vector $\bm{r}$ as $V_k^{(\bm{r})}(\mathcal T)$. Here and in the next subsection, the superscript $\bm r$ will be used to emphasize the dependency of these finite element spaces on the continuity vector.

Note that the functions in $V_k^{(\bm{r})}$ are of $C^{r_2}$ continuity across the vertices and of $C^{r_1}$ continuity for two dimensions.
\begin{proposition}
    \label{prop:derham}
    Let $\bm{r} = (r_1,r_2)$, $\bm{r'} = (r_1 - 1, r_2 - 1)$, $\bm{r''} = (r_1 - 2, r_2 - 2)$ with $r_1 \ge 1$. Suppose $r_2 \ge 2r_1$ and $k \ge 2r_2+1$. Then, it holds that the following sequence
    \begin{equation}
        \label{eq:smooth-de-rham}
        \mathbb R \stackrel{\hookrightarrow}{\longrightarrow}V_{k}^{(\bm{r})}(\mathcal T) \stackrel{\curl}{\longrightarrow} \big[V_{k-1}^{(\bm{r'})}(\mathcal T)\big]^2 \stackrel{\div}{\longrightarrow} V_{k-2}^{(\bm{r''})}(\mathcal T)\stackrel{}{\longrightarrow} 0
    \end{equation}
    is a complex and exact, provided that the domain $\Omega$ is simply connected.
\end{proposition}


\begin{remark}
In \Cref{prop:derham}, the continuity vector $\bm{r''}$ might be $(-1,r_2'')$ for some nonnegative integer $r_2''$, which is a special case in \Cref{sec:l2space}. In particular, the case with $r_1 = 1, r_2 = 2$ in the above sequence \eqref{eq:smooth-de-rham} recovers the complex constructed in Falk and Neilan \cite{falk2013stokes}.
\end{remark}

\begin{proof}
    It is straightforward to see that \eqref{eq:smooth-de-rham} is a complex. To show the exactness of \eqref{eq:smooth-de-rham}, it suffices to prove the discrete kernel of $\div$ is just the discrete image of $\curl$, and to compute the dimensions of these finite element spaces in \eqref{eq:smooth-de-rham}. 
    
    Notice that the dimension of $\mathcal P_{k}^{(\bm{r})}(\mathcal T)$ is just the total number of degrees of freedom defined at the vertices, on the edges and in the interior of element $K$. Denote the number of vertices, edges and faces by $V,E,F$, respectively. At each vertex, the number of degrees of freedom is $a_V = \binom{r_2+2}{2}$. On each edge, the number of degrees of freedom is
    \begin{equation*}
        \begin{split}a_E &= \sum_{m = 0}^{r_1}(k + m - 2r_2 - 1) = \frac{1}{2} (2k - 4r_2 - 2 + r_1)(r_1+1)\\ & = (k - 2r_2 - 1)(r_1+1) + \binom{r_1+1}{2}.
        \end{split}
    \end{equation*}
    Inside each element, the number of degrees of freedom is $\binom{k+2}{2} - a_V - a_E$. As a summary, this gives
    \begin{equation*}
        \dim(V_{k}^{(\bm{r})}(\mathcal T)) = \binom{r_2 + 2}{2} V + \sum_{m=0}^{r_1}(k+m-2r_2-1) E + \left(\binom{k+2}{2} - a_V - a_E\right)F.
    \end{equation*}
    Since $$\binom{n}{2} - 2\binom{n-1}{2} + \binom{n-2}{2} = 1$$ and
    $$(k-2r_2-1)(r_1 + 1) - 2(k-2r_2)(r_1) + (k-2r_2+1)(r_1 - 1) = -2,$$
    by Euler's formula it follows that
    \begin{equation*}
        \begin{split}
            \dim(V_{k}^{(\bm{r})}(\mathcal T)) - &
            2\dim(V_{k-1}^{(\bm{r}')}(\mathcal T)) + \dim(V_{k-2}^{(\bm{r}'')}(\mathcal T)) \\
            =
            & V -E + F = 1.
        \end{split}
    \end{equation*}

It remains to show that the discrete kernel of $\div$ is just the discrete image of $\curl$. Suppose that for some $\bm v \in [V_{k-1}^{(\bm{r}')}]^2$ such that $\div \bm v = 0$, then by the exactness of the continuous Stokes complex, there exists $\phi \in H^2(\Omega)$ such that $\curl \phi = \bm v$. By Sobolev's embedding, $\phi$ is continuous. Restricting this identity to each element $K$ immediately shows that $\phi$ is a polynomial of degree at most $k$ in each element $K$. It remains to show that $\phi$ satisfies the required continuity. Since $\bm v = \curl \phi$ is of $C^{r_2-1}$ continuity, it follows that $\phi$ is of $C^{r_2}$ continuity at each vertex. Similarly, it can be found that $\bm v$ is of $C^{r_1}$ continuity across each edge.

Therefore, it holds that $\ker \div\subseteq \im \curl$ on the discrete kernel. The converse inclusion is from the definition of the complex. Hence it must hold that $\ker \div = \im \curl$ on the discrete level. Therefore, the complex \eqref{eq:smooth-de-rham} is exact.
\end{proof}

\subsection{\texorpdfstring{$H(\div)$}{H(div)} Finite Element Space: A Generalized Stenberg Element}
\label{sec:Hdiv}

In this section, it is assumed that the continuity vector $\bm r = (r_1, r_2)$ satisfies the conditions: $r_1 = l + 1/2$ for some integer $l \ge -1$, and $r_2 \ge 2r_1 + 1$ is a nonnegative integer. It is also assumed that the polynomial degree $k \ge 2r_2+1$. For edge $e$, denote by $\bm n$ the (outer) normal vector and $\bm t$ the tangential vector. 

This section is motivated by the triangular Stenberg element \cite{stenberg1990error}, where $\bm r$ is chosen as $(-\frac{1}{2}, 0)$. The shape function space for the Stenberg element is $[\mathcal P_2(K)]^2$. Given $u \in [\mathcal{P}_2(K)]^2$, the degrees of freedom are as follows:
\begin{itemize}
\item[-] The value of $\bm u(\bm x)$ at each vertex $\bm x$.
\item[-] $\int_e(\bm u \cdot\bm n)$ on each edge $e$.
\item[-] $\int_K \bm u \cdot \bm q $ for $\bm q \in \mathcal{RT}_0$, where $\mathcal{RT}_0$ is the lowest order Raviart--Thomas element. 
\end{itemize}

For the global Stenberg element space, there is another characterization, see \cite{christiansen2018nodal}. Explicitly, the Stenberg element space $\bm{\mathcal{S}}$ admits the following decomposition
\begin{equation}
\bm {\mathcal S}(\mathcal T) := [\mathcal P_2(\mathcal T)]^2 \oplus \bm{\mathcal B}_{\div}(\mathcal T).
\end{equation}
Here $\mathcal P_2(\mathcal T)$ is the standard $H^1$ conforming quadratic Lagrange element space, while $\bm{\mathcal{B}}_{\div}(\mathcal T)$ is the elementwise $H(\div)$ bubble function space characterized by 
\begin{equation}
\bm{\mathcal{B}}_{\div}(K) := \{ \bm u \in [\Pcal_2(K)]^2 :~\bm u\cdot \bm n = 0 \text{ on each edge } e\}. 
\end{equation}
and the global $H(\div)$ bubble function space is defined as
\begin{equation}
\bm{\mathcal{B}}_{\div}(\mathcal T) := \{ \bm u \in L^2(\Omega) : u|_K \in \bm{\mathcal B}_{\div}(K), \forall K \in \mathcal T\}.
\end{equation}

It can be proved that, for each element $K$, the dimension of $\bm{\mathcal B}_{\div}(K)$ is 3, and the last degrees of freedom can be modified as $\int_K \bm u \cdot \bm q$ for $\bm q \in \bm{\mathcal B}_{\div}(K)$. For further information, the interested readers can refer to \cite{christiansen2018nodal} for the higher degree case and \cite{hu2015family,hu2021conforming} for some tensor generalization.

Given the continuity vector $\bm r = (r_1,r_2)$, a generalized $H(\div)$ conforming finite element space $\bm U$ is constructed in this section such that the following requirements are fulfilled: For $\bm u \in \bm U$, $\bm u$ {is of} $C^{r_2}$ continuity across the vertex, and $\bm u \cdot \bm n$ {is of} $C^{r_1 + 1/2}$ continuity across the internal edges, while $\bm u \cdot \bm t$ {is of} $C^{r_1 - 1/2}$ continuity across the internal edges. To this end, it is natural to introduce the following generalized $H(\div)$ bubble function space.

A new generalized bubble function space $\bm{\mathcal{B}}_{\div,k}^{(\bm r)}(K)$ is defined as 
\begin{equation}
    \begin{split}
    \bm{\mathcal{B}}_{\div,k}^{(\bm r)}(K):= \{ \bm u \in [\mathcal P_k(K)]^2 ~|~ & D^{\alpha}\bm u(\bm x) = 0 \text{ for } |\alpha| \le r_2 \text{ at each vertex } \bm x \in K,\\ &(D^{\beta}) \bm u\cdot \bm n|_e = 0 \text{ for } |\beta| \le r_1+ \frac{1}{2} \text{ on each edge }e\text{ of } K, \\ & (D^{\gamma})(\bm u \cdot \bm t)|_e = 0 \text{ for } |\gamma| \le r_1 - \frac{1}{2} \text{ on each edge }e\text{ of } K\}.
    \end{split}
\end{equation}

Then the degrees of freedom are defined as follows. 
\begin{definition}
    \label{defn:generlized-stenberg}
    Given $\bm{u} \in [\Pcal_k(K)]^2$, the degrees og freedom are as follows 
\begin{itemize}
\item[-] $D^{\alpha} \bm u(\bm x)$ at vertex $\bm x$ {of $K$}, for $|\alpha| \le r_2$.
\item[-] $\int_{e} \frac{\partial}{\partial \bm n^m} (\bm u \cdot \bm n) q$ for $q \in \Pcal_{k -2r_2 - m}$, $m = 0,1,\cdots, r_1+\frac 12$ {on each edge $e$ of $K$.}
\item[-] $\int_{e} \frac{\partial}{\partial \bm n^m} (\bm u \cdot \bm t) q$ for $q \in \Pcal_{k - 2r_2 - m}$, $m = 0,1,\cdots, r_1 - \frac 12$ {on each edge $e$ of $K$.}
\item[-] $\int_K \bm u\cdot \bm q$ for $\bm q \in \bm{\mathcal{B}}_{\div,k}^{(\bm r)}(K)$.
\end{itemize}
\end{definition}
It is not easy to write down the explicit form {of functions} in $\bm{\mathcal{B}}_{\div,k}^{(\bm r)}(K)$. Nevertheless, it is possible to count the dimension of $\bm{\mathcal{B}}_{\div,k}^{(\bm r)}(K)$. Since $k_2 \ge 2(k_1+\frac 12)$, the degrees of freedom of the first three sets of degrees of freedom in \Cref{defn:generlized-stenberg} are linearly independent. Therefore, the degrees of freedom defined in \Cref{defn:generlized-stenberg} are unisolvent for the shape function space $[\mathcal P_k(K)]^2$. Moreover, the resulting global finite element space admits the continuity in the following proposition.

\begin{proposition}
    \label{prop:derham2}
    Let $\bm{r} = (r_1+1/2,r_2+1)$, $\bm{r'} = (r_1, r_2)$, $\bm{r''} = (r_1 -1/2, r_2 - 1)$ with $r_1 \ge - 1/2$. Suppose $r_2 \ge 2r_1+1$ and $k \ge 2r_2+1$. Then, it holds that the following sequence
    \begin{equation}
        \label{eq:smooth-de-rham2}
        \mathbb R \stackrel{\hookrightarrow}{\longrightarrow}V_{k+1}^{(\bm{r})}(\mathcal T) \stackrel{\curl}{\longrightarrow} \bm S_{k}^{(\bm{r'})}(\mathcal T) \stackrel{\div}{\longrightarrow} V_{k-1}^{(\bm{r''})}(\mathcal T)\stackrel{}{\longrightarrow} 0
    \end{equation}
    is a complex and exact, provided that the domain $\Omega$ is simply connected.
\end{proposition}
\begin{proof}
    Recall from the proof in \Cref{prop:derham} that at each vertex, the sum of the numbers of degrees of freedom of $V_{k+1}^{(\bm{r})}(\mathcal T) $ and $V_{k-1}^{(\bm{r''})}(\mathcal T)$ is $\binom{r_2+3}{2} + \binom{r_2+1}{2}$. On each edge, the sum of the numbers of degrees of freedom is
    \begin{equation*}
    a_{E,1} + a_{E,3} = \sum_{m = 0}^{r_1+1/2}(k+1+m-2r_2-3) + \sum_{m = 0}^{r_1-1/2}(k-1+m-2r_2+1)
    \end{equation*}
    Now consider the degrees of freedom of the space $\bm S_k^{(\bm r')}(\mathcal T)$ defined in \Cref{defn:generlized-stenberg}. At each vertex, the number of degrees of freedom is $\binom{r_2+3}{2}$. On each edge, the number of degrees of freedom is
    $$a_{E,2} = \sum_{m= 0}^{r_1+1/2}(k - 2r_2 -1 +m) + \sum_{m = 0}^{r_1 - 1/2}(k- 2r_2 -1 +m ).$$
    Since
    $$\binom{n}{2} - 2\binom{n-1}{2} + \binom{n-2}{2} = 1,$$
    it holds that
    $$a_{E,1} + a_{E,3} - a_{E,2} = \sum_{m = 0}^{r_1+1/2}(-1) + \sum_{m = 0}^{r_1-1/2}(1) = -1.$$
    Denote by $a_{K,1}, a_{K,2}, a_{K,3}$ the degrees of freedom defined inside element $K$. Since 
    $$a_{K,1} = \dim \Pcal_{k+1} - \binom{r_2+3}{2} - a_{E,1},$$
    $$a_{K,2} = \dim \Pcal_{k}^2 - \binom{r_2+2}{2} - a_{E,2},$$ and
    $$a_{K,3} = \dim \Pcal_{k+1} - \binom{r_2+1}{2} - a_{E,3},$$
    it holds that 
    $$a_{K,1} - a_{K,2} + a_{K,3} = 1 - 1 + 1 = 1.$$
    Consequently, by Euler's formula, it holds that
    \begin{equation*}
        \begin{split}
            \dim(V_{k+1}^{(\bm{r})}(\mathcal T)) - &
            \dim(\bm S_{k}^{(\bm{r}')}(\mathcal T)) + \dim(V_{k-1}^{(\bm{r}'')}(\mathcal T)) \\
            =
            & V -E + F = 1.
        \end{split}
    \end{equation*}

    It remains to show that the discrete kernel of $\div$ is just the discrete image of $\curl$. Suppose that for some $\bm v \in \bm S_{k}^{(\bm{r}')}(\mathcal T)$ such that $\div \bm v = 0$, then by the exactness of the continuous Stokes complex, there exists $\phi \in H^1(\Omega)$ such that $\curl \phi = \bm v$. Restricting this identity to each element $K$ immediately tells that $\phi$ is a polynomial of degree at most $k+1$ in each element $K$. It remains to show that $\phi$ satisfies the required continuity. Since $\phi \in H^1(\Omega)$ is piecewise smoothing, it follows that $\phi$ is continuous. 
    
    At each vertex, it follows from the $C^{r_2-1}$ continuity of $\bm v = \curl \phi$ that $\phi$ is of $C^{r_2}$ continuity. On each edge, the assumption yields that $$\frac{\partial}{\partial \bm n^{m}}(\curl \phi \cdot \bm t) = \frac{\partial}{\partial \bm n^{m+1}}(\phi)$$ is continuous (single-valued) across each internal edge $e$, for $m =0,1,\cdots,r_1 - 1/2$. Therefore, it holds that $\ker \div\subseteq \im \curl$ on the discrete level. The converse inclusion is from the definition of the complex. Hence it must hold that $\ker \div = \im \curl$ on the discrete level. Therefore, the complex \eqref{eq:smooth-de-rham2} is exact.
\end{proof}

\bibliographystyle{plain}
\bibliography{ref}
\end{document}